\documentclass[12pt,leqno]{amsart}

\usepackage{amsmath,amssymb,amscd,amsthm}
\usepackage{latexsym}
\usepackage[dvips]{graphicx,epsfig}
\usepackage{graphicx}
\usepackage{color}
\usepackage{fourier}

\headheight=8pt
\topmargin=0pt
\textheight=624pt
\textwidth=432pt
\oddsidemargin=18pt
\evensidemargin=18pt

\addtolength{\textwidth}{1cm}
\addtolength{\hoffset}{-1cm}

\newtheorem{theorem}{Theorem}
\newtheorem{lemma}{Lemma}
\newtheorem{proposition}{Proposition}

\newtheorem{definition}{Definition}
\newtheorem{corollary}{Corollary}

\newtheorem{claim}{Claim}

%SPACING

%DELIMETERS

\newcommand{\f}[2]{\frac{#1}{#2}}
\newcommand{\dpr}[2]{\langle #1,#2 \rangle}

%NUMBERS

\newcommand{\cj}{{\mathcal J}} 
\newcommand{\cv}{{\mathcal V}} 
%\newcommand{\cl}{{\mathcal L}} 

%GREEK LETTERS
\newcommand{\al}{\alpha}

\newcommand{\ga}{\gamma}

\newcommand{\de}{\delta}

\newcommand{\la}{\lambda}

\newcommand{\si}{\sigma}

%EUCLIDEAN SPACES

\newcommand{\rone}{\mathbf R}

\DeclareMathOperator{\sech}{sech}

%SPACES

%CALLIGRAPHIC

\newcommand{\cl}{\mathcal L}

\newcommand{\ck}{\mathcal K}

%GENERAL

\newcommand{\dint}{\displaystyle\int}

\newcommand{\df}{\displaystyle\frac}
\newcommand{\p}{\partial}

%BEGIN END

\newcommand{\beq}{\begin{equation}}
\newcommand{\eeq}{\end{equation}}
\newcommand{\beqna}{\begin{eqnarray*}}
\newcommand{\eeqna}{\end{eqnarray*}}
\newcommand{\beqn}{\begin{equation*}}
\newcommand{\eeqn}{\end{equation*}}
\newcommand{\bp}{\begin{proof}}
\newcommand{\ep}{\end{proof}}
\newcommand{\bprop}{\begin{proposition}}
\newcommand{\eprop}{\end{proposition}}
\newcommand{\bt}{\begin{theorem}}
\newcommand{\et}{\end{theorem}}
\newcommand{\bex}{\begin{Example}}
\newcommand{\eex}{\end{Example}}
\newcommand{\bc}{\begin{corollary}}
\newcommand{\ec}{\end{corollary}}
\newcommand{\bcl}{\begin{claim}}
\newcommand{\ecl}{\end{claim}}
\newcommand{\bl}{\begin{lemma}}
\newcommand{\el}{\end{lemma}}

\begin{document}

\title
[On the Barashenkov-Bogdan-Zhanlav solitons and their stability]
{On the Barashenkov-Bogdan-Zhanlav solitons and their stability}

\author{Wen Feng}
\author{Milena Stanislavova}
\author{Atanas G. Stefanov*}

\address{Wen Feng, Department of Mathematics, Niagara University, 5795 Lewiston Rd., Niagara University,  NY 14109 }
\email{wfeng@niagara.edu}

\address{Milena Stanislavova, Department of Mathematics, University of Alabama - Birmingham,  1402 10th Avenue South, 
	Birmingham AL 35294-1241  }
\email{mstanisl@uab.edu}

 \address{Atanas G. Stefanov,  Department of Mathematics, University of Alabama - Birmingham,  1402 10th Avenue South, 
 	Birmingham AL 35294-1241 }
 \email{stefanov@uab.edu}

\thanks{  Feng is partially supported by a graduate fellowship under grant \# 1516245. 
Stanislavova is partially supported by NSF-DMS, Applied Mathematics program, under
grant \# 1516245. Stefanov is partially supported by NSF-DMS, under grant \# 1908626.}

\date{\today}

\subjclass[2000]{35Q55, 35Q60,35B32}

\keywords{solitons, stability, Lugiato-Lefever model}
\begin{abstract}
The Barashenkov-Bogdan-Zhanlav solitons $u_\pm$ for the forced NLS/Lugiato-Lefever model on  the line are considered.  While the instability of $u_+$ was established in the original paper, \cite{B1}, the analogous question for $u_-$ was only considered heuristically and numerically. 
We rigorously analyze the stability of $u_-$ in the various regime of the parameters. In particular, we show that $u_-$ is spectrally stable for small pump strength $h$. Moreover, $u_-$ remains spectrally stable until a pair of neutral eigenvalues of negative Krein signature hits another pair of eigenvalues, which has emanated from the edge of the continuous spectrum, \cite{B1, BBK, ABP}.   After the collision, an  instability is conjectured and numerically observed in previous works, \cite{B1}. 
\end{abstract}

\maketitle

\section{Introduction}
Optical combs generated by micro-resonators is an active area of research, \cite{CY, KHD,  Matsko,MR}, see also \cite{Holz} for reports on concrete experimental data. As a physical process having to do with electromagnetism, the relevant starting point is the Maxwell equation. We refer the interested reader to consult  \cite{LL1} for physical derivation in the important case of a cavity filled with medium obeying the Kerr's law. The proposed mechanism of pattern formations and the related discussions on various parameters are also presented   in great detail.  There are numerous papers dealing with the model derivation, as well as  reductions to dimensionless variables, see for example \cite{CM},\cite{LL1}, \cite{MR}. 

    In this article, our starting point of investigation is the consensus model in one spatial dimension, namely  the Lugiato-Lefever equation.  In normalized  variables, it may be written as 
    \begin{equation}\label{lug}
   i u_t  + u_{xx} - u+ 2|u|^2u=-i \gamma u-h, x\in \rone, 
    \end{equation}
    where $u$ is the field envelop, $t$ is the normalized time, $x$ is the retarded and normalized  coordinate, $\gamma$ is the normalized damping/detuning  rate, and $h$ is the normalized pump strength.   Note that both $\ga, h$ are real parameters. 
    \subsection{Steady states} 
    The time-independent solutions of \eqref{lug}  satisfy the elliptic PDE
    \begin{equation}
    \label{10} 
    -u''+ u - 2|u|^2 u=i \ga u + h. 
    \end{equation}
    Further reducing the problem, we consider   the damping free case, that is $\gamma =0$.  Such  problem is  referred to as forced (or driven) NLS equation,  which has a different physical interpretation, \cite{B1, BS}.   The study of the existence and stability of the steady states \eqref{10} were analyzed for different ranges of the parameters $\ga, h$ in the following works, \cite{KN, B1, NB, Ter, BS}. 

The main subject of our investigations will be about  the time independent solutions, for the forced NLS problem.  
In this case, the steady state problem, that is \eqref{10},  takes the simpler form 
\begin{equation}
\label{11} 
-u''+ u - 2|u|^2 u= h. 
\end{equation}
Clearly, one cannot expect, for $h\neq 0$, the solutions to \eqref{11} to decay at $\pm \infty$, in fact the terminal value, denoted  $\lim_{x\to \pm \infty}u_{\pm}(x)=\psi_0$ must satisfy the cubic equation 
$2\psi_0^3-\psi_0+h=0$. As it happens, these problems have been studied before, see \cite{B1}.  In fact, there are  the following explicit solutions, which have appeared  repeatedly in the literature.   
\begin{proposition}
	\label{prop:10} 	
	Let $h\in (0, \f{2}{3\sqrt{6}})$ and $\al\in (0, \infty)$ is the unique real, so that 
	\begin{equation}
	\label{h:10} 
	h =      \frac{\sqrt{2} \cosh^2 \alpha}{(1+2 \cosh^2 \alpha)^{3/2}}.
	\end{equation}
	  Then, the  functions $u_\pm$, given by 
	\begin{eqnarray}
	\label{20} 
	u_{\pm}(x) &=& \psi_0(1+ \varphi_{\pm}(x))= \psi_0\left(  1+ \frac{2 \sinh^2 \alpha}{1 \pm \cosh \alpha \cosh( Ax)}\right), \\
	\label{60} 
		\psi_0 &=&  \frac{1}{\sqrt{2(1+2 \cosh^2 \alpha)}}, \ \ A=  \frac{\sqrt{2} \sinh \alpha }{\sqrt{1+ 2\cosh^2 \alpha}} 
	\end{eqnarray}
  are solutions to \eqref{11}.   
\end{proposition}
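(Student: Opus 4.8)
The plan is to verify \eqref{11} by direct substitution of the explicit formulas \eqref{20}--\eqref{60}. The first observation is that for $\al>0$ the denominator $D:=1\pm\cosh\al\cosh(Ax)$ never vanishes — for the upper sign $D\ge 1+\cosh\al$, and for the lower sign $D\le 1-\cosh\al<0$ — so the $u_{\pm}$ in \eqref{20} are smooth, bounded and real-valued; consequently $|u_{\pm}|^2=u_{\pm}^2$ and \eqref{11} reduces to the scalar ODE $-u''+u-2u^3=h$ on $\rone$. I would then proceed in three stages: a preliminary about $\al$ and the terminal value, a reduction of the ODE to an equation for the profile $\varphi_{\pm}$, and finally the verification of that reduced equation.

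As a preliminary, writing $t=\cosh^2\al$ the relation \eqref{h:10} reads $h=\sqrt{2}\,t(1+2t)^{-3/2}=:g(t)$, and $g'(t)=\sqrt{2}\,(1-t)(1+2t)^{-5/2}<0$ for $t>1$, so $g$ is a decreasing bijection of $(1,\infty)$ onto $(0,g(1))=(0,\frac{2}{3\sqrt{6}})$; this justifies the existence and uniqueness of $\al\in(0,\infty)$ and identifies the stated range of $h$ as exactly the image of $g$. More importantly, I would record the algebraic identity on which the whole argument rests: \eqref{h:10} is precisely the condition forcing the terminal value $\psi_0=\lim_{x\to\pm\infty}u_{\pm}(x)$ of \eqref{20} to solve the cubic $2\psi_0^3-\psi_0+h=0$. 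Indeed, from \eqref{60} one has $2\psi_0^2=(1+2\cosh^2\al)^{-1}$, hence $2\psi_0^3-\psi_0=\psi_0(2\psi_0^2-1)=-\psi_0\,\frac{2\cosh^2\al}{1+2\cosh^2\al}$, and inserting the formula \eqref{60} for $\psi_0$ turns the right-hand side into $-\frac{\sqrt{2}\cosh^2\al}{(1+2\cosh^2\al)^{3/2}}=-h$.

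For the reduction, I would substitute $u_{\pm}=\psi_0(1+\varphi_{\pm})$ into $-u''+u-2u^3=h$, expand $(1+\varphi_{\pm})^3=1+3\varphi_{\pm}+3\varphi_{\pm}^2+\varphi_{\pm}^3$, use the cubic identity above to cancel the constants $\psi_0-2\psi_0^3$ against $h$, and divide by $\psi_0\neq 0$. What remains is
\[
-\varphi_{\pm}''+(1-6\psi_0^2)\,\varphi_{\pm}=6\psi_0^2\,\varphi_{\pm}^2+2\psi_0^2\,\varphi_{\pm}^3 .
\]
Using \eqref{60} again, $1-6\psi_0^2=\frac{2\sinh^2\al}{1+2\cosh^2\al}=A^2$, while $6\psi_0^2=\frac{3}{1+2\cosh^2\al}$ and $2\psi_0^2=\frac{1}{1+2\cosh^2\al}$, so the reduced equation is $-\varphi_{\pm}''+A^2\varphi_{\pm}=6\psi_0^2\varphi_{\pm}^2+2\psi_0^2\varphi_{\pm}^3$, now entirely in terms of $A$ and $\al$.

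The last stage is to check this from the closed form $\varphi_{\pm}=\frac{2\sinh^2\al}{D}$ with $D$ as above. Two elementary facts do the job: $D''=A^2(D-1)$, and, since $\cosh^2(Ax)=1+\sinh^2(Ax)$ and $\cosh\al\cosh(Ax)=\pm(D-1)$, the identity $(D')^2=A^2\cosh^2\al\sinh^2(Ax)=A^2\big((D-1)^2-\cosh^2\al\big)=A^2(D^2-2D-\sinh^2\al)$. Differentiating $\varphi_{\pm}=2\sinh^2\al/D$ twice and substituting these yields $\varphi_{\pm}''=2A^2\sinh^2\al\,\frac{D^2-3D-2\sinh^2\al}{D^3}$, hence $-\varphi_{\pm}''+A^2\varphi_{\pm}=2A^2\sinh^2\al\left(\frac{3}{D^2}+\frac{2\sinh^2\al}{D^3}\right)$; replacing $D^{-k}$ by the corresponding powers of $\varphi_{\pm}$ (from $D^{-1}=\varphi_{\pm}/(2\sinh^2\al)$) converts the right-hand side into $\frac{3A^2}{2\sinh^2\al}\varphi_{\pm}^2+\frac{A^2}{2\sinh^2\al}\varphi_{\pm}^3$, and the verification is complete once one checks $\frac{A^2}{2\sinh^2\al}=2\psi_0^2$ — equivalently $A^2=\frac{2\sinh^2\al}{1+2\cosh^2\al}$, which is \eqref{60}. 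Since the statement is an explicit-solution verification I do not expect a genuine obstacle; the only point needing care is the bookkeeping of the $\pm$ sign, which enters only through $D$ and drops out after the Pythagorean substitution, so that $u_+$ and $u_-$ are covered by the same computation.
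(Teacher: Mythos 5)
Your verification is correct and complete. The paper itself offers no proof of Proposition \ref{prop:10} --- it simply records the formulas as known explicit solutions and cites \cite{B1} --- so there is no argument in the text to compare yours against; your direct substitution supplies the missing computation. All the key identities check out: $g(t)=\sqrt{2}\,t(1+2t)^{-3/2}$ is indeed a decreasing bijection of $(1,\infty)$ onto $(0,\tfrac{2}{3\sqrt{6}})$, which settles existence and uniqueness of $\al$; the terminal-value identity $h=\psi_0-2\psi_0^3$ and the relation $1-6\psi_0^2=A^2$ are exactly the facts the paper records (without derivation) in the remark following the proposition and uses later in Lemma \ref{le:14}; and the two identities $D''=A^2(D-1)$ and $(D')^2=A^2(D^2-2D-\sinh^2\al)$ reduce the final check to algebra in $D^{-1}$, with the constants $\f{3A^2}{2\sinh^2\al}=6\psi_0^2$ and $\f{A^2}{2\sinh^2\al}=2\psi_0^2$ matching the reduced equation $-\varphi_{\pm}''+A^2\varphi_{\pm}=6\psi_0^2\varphi_{\pm}^2+2\psi_0^2\varphi_{\pm}^3$. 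Your preliminary observation that $D$ is bounded away from zero for both sign choices (positive for $u_+$, negative for $u_-$) is also the right thing to record, since it guarantees smoothness and lets you replace $|u|^2u$ by $u^3$.
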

{\bf Remark:}  Note the important relations  $h=\psi_0-2\psi_0^3$ and $\psi_0^2<\f{1}{6}$, as a consequence of   \eqref{60}. As a result, the map $\psi_0\to h$ is a one-to-one correspondence between $(0, \f{1}{\sqrt{6}})$ and $(0,  \f{2}{3\sqrt{6}})$, whence the constraints in the statement of Proposition \ref{prop:10}. 

 It is a natural question to see whether such solutions are dynamically stable in the context of the forced NLS, that is  \eqref{lug}, with $\ga=0$. In fact, this question has been considered in \cite{B1}, where the authors have offered an analytical solution for the case $u_+$, while the case of $u_-$ was treated only numerically. 
 \subsection{Linearizations and stability} 
 \label{sec:2.1} 
 We first perform the linearization of  the system \eqref{lug} about the solutions $u_\pm$. 
 
 Consider perturbations in the form $u=u_{\pm} + y_1(x) +i y_2(x)$.  Plug this in \eqref{lug}, and after ignoring terms $O(z^2)$, we obtain the following linear problem  
 \begin{equation}
 \label{30} 
 \p_t \left(
 \begin{array}{c}
 y_1 \\
 y_2\\
 \end{array}
 \right)
 = 
 \left(
 \begin{array}{cc}
 0 &1\\
 -1 &0 \\
 \end{array} 
 \right)    
 \left(
 \begin{array}{cc}
 \cl_{+} &0\\
 0 &\cl_{-} \\
 \end{array} 
 \right)  
 \left(
 \begin{array}{c}
 y_1\\
 y_2\\
 \end{array} 
 \right) 
 \end{equation}
 where 
 \[ \begin{cases} 
  \cl_{+}=- \partial_{xx}-6u_\pm^2+1\\
 \cl_{-} =- \partial_{xx}-2u_\pm^2+1
 \end{cases}
 \]
 Introduce the eigenvalue ansatz $\left(
 \begin{array}{c}
 y_1 \\
 y_2\\
 \end{array}
 \right)\to e^{\la t} \left(
 \begin{array}{c}
 z_1 \\
 z_2\\
 \end{array}
 \right)$, the self-adjoint operator $\cl$ and the skew symmetric  $\cj$ 
 \begin{equation}
 \label{35} 
 \cl: =\left(
 \begin{array}{cc}
 \cl_{+} &0\\
 0 &\cl_{-} \\
 \end{array} 
 \right), \cj:=\left(
 \begin{array}{cc}
 0 &1\\
 -1 &0 \\
 \end{array} 
 \right)
 \end{equation}
 so that we can rewrite \eqref{30} in the compact form 
 \begin{equation}
 \label{40} 
 \cj \cl \vec{z}=\la \vec{z},
 \end{equation}
 which is the well-known Hamiltonian formulation of the eigenvalue problem.  
 \begin{definition}
 	\label{defi:10} 
 	We say that the wave $u_\pm$ is spectrally stable if the linearized operator $\cj \cl$ does not have spectrum in the 
 	right-hand complex plane. In other words $\si(\cj \cl) \subset \{\la: \Re \la\leq 0\}$.  Otherwise, the wave is referred to as spectrally unstable. 
 \end{definition}
 Following Weyl's theory\footnote{which asserts the stability of essential spectrum under suitable perturbations}, we use the standard split  of the spectrum into pure point spectrum and essential spectrum. More precisely, for a closed operator $A$ pure point spectrum $\si_{p.p.}(A)$ consists of eigenvalues of finite multiplicity of $A$, whereas the rest is essential spectrum, $\si_{ess.}(A)=\si(A)\setminus \si_{p.p.}(A)$. 
 
  Next, we present  our  main result. 
 \begin{theorem}
 	\label{theo:10} 
 	The solitons $u_+$ is spectrally unstable, with exactly one positive eigenvalue. 
 	
 	The steady state  $u_-$ on the other hand is spectrally stable for small values $h>0$, up to some critical value $h^*$. 
 	
   More precisely, there exists a value $\al^*\in [0, \infty)$, so that for all   $\al>\al^*$, the linearized operator  satisfies $\si(\cj\cl)\subset i\rone$. Note that  
 	$$
 	\si_{ess.}(\cj \cl)=\{i \la: \la\in \rone, |\la|\geq \sqrt{(1-6\psi_0^2(\al))(1-2\psi_0^2(\al))} \}.
 	$$
 	Furthermore, $\si_{p.p.}(\cj\cl)$ has a multiplicity  two eigenvalue at zero, due to translational symmetry\footnote{and as such has algebraic multiplicity two and geometric multiplicity one, see  Section \ref{sec:3} below for explicit descriptions of these} as well as a pair of simple, 
 	purely imaginary eigenvalues $\pm i \mu(\al)$ with  negative Krein signature. 
 	
 	Finally,   $\lim_{\al\to \infty} \mu(\al)=0$ and the eigenvalues $\pm i \mu(\al)$ collide, as $\al\to \al^*+$,  either with the edge of the continuous  spectrum 
 	$
 	\pm i \sqrt{(1-6\psi_0^2(\al))(1-2\psi_0^2(\al))} 
 	$
 	or with another pair of eigenvalues $\pm i \tilde{\mu}(\al)$, so that $0<\mu(\al)<\tilde{\mu}(\al)< \sqrt{(1-6\psi_0^2(\al))(1-2\psi_0^2(\al))}$, which are of positive Krein signature. That is,  $0<\mu(\al)<\tilde{\mu}(\al):\lim_{\al\to \al^*+} \tilde{\mu}(\al) - \mu(\al)=0$. 
  	In other words, we have a pair of neutral eigenvalues $\pm i \mu(\al)$, 
 		which travels from $0$ (corresponding to  $h=0$ and  $\al=\infty$) to $\mu(\al^*)$, 
 	and  $h\to  h^*=h(\al^*)$. 
 \end{theorem}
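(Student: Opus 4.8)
The plan is to combine the Hamiltonian--Krein instability index with a perturbation analysis about the degenerate limit $\al\to\infty$, in which $\psi_0\to 0$ and $u_-$ collapses onto the NLS soliton $-\sech$. The essential spectrum is immediate: since $u_\pm(x)\to\psi_0$ exponentially, $\cj\cl$ differs from the constant--coefficient operator obtained by replacing $u_\pm^2$ by $\psi_0^2$ by an exponentially localized (relatively compact) perturbation; diagonalizing the Fourier symbol of the latter gives $\pm i\sqrt{(\xi^2+1-6\psi_0^2)(\xi^2+1-2\psi_0^2)}$, $\xi\in\rone$, and since $\psi_0^2<\tfrac16$ both factors are positive, so Weyl's theorem yields $\si_{ess}(\cj\cl)=\{i\la:|\la|\ge\sqrt{(1-6\psi_0^2)(1-2\psi_0^2)}\}$.

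Next I would analyze the scalar operators. Differentiating \eqref{11} gives $\cl_+u_\pm'=0$, and from \eqref{20} one checks that $u_\pm'$ has exactly one zero (for $u_+$ because it is even and strictly decreasing on $x>0$; for $u_-$ because $\varphi_-$ is negative, even, and strictly increasing on $x>0$, so $u_-$ rises monotonically from $u_-(0)<0$ to $\psi_0>0$); hence by Sturm oscillation $u_\pm'$ is the second eigenfunction of $\cl_+$, so $\ker\cl_+=\mathrm{span}\{u_\pm'\}$ is simple and $n(\cl_+)=1$. For $\cl_-$ about $u_+$: by \eqref{11}, $\cl_-u_+=h>0$ with $u_+>\psi_0>0$, so if $\cl_-$ had nonpositive spectrum its ground state $\Psi>0$, with eigenvalue $\mu_0\le0$, would satisfy (boundary terms vanish as $\Psi,\Psi'$ decay) $\mu_0\langle\Psi,u_+\rangle=\langle\Psi,\cl_-u_+\rangle=h\langle\Psi,1\rangle>0$, contradicting $\mu_0\le0$; hence $\cl_-$ is strictly positive, $n(\cl_-)=0$ and $\ker\cl_-=\{0\}$. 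For $\cl_-$ about $u_-$: $u_-$ changes sign, with exactly two zeros $\pm x_0$, and on $(-x_0,x_0)$ the positive function $-u_-$ vanishes at the endpoints while $\cl_-(-u_-)=-h<0$; pairing against the positive principal Dirichlet eigenfunction shows that eigenvalue is negative, so $n(\cl_-)\ge 1$ by domain monotonicity. Conversely $u_-\to-\sech$ with exponential tails, so $\cl_-\to-\partial_x^2+1-2\sech^2 x$ in norm resolvent sense, and the latter P\"oschl--Teller operator has a single, simple eigenvalue $0$ below its continuum $[1,\infty)$; thus $n(\cl_-)=1$ and $\ker\cl_-=\{0\}$ for $\al$ large (and, since a second eigenvalue of $\cl_-$ reaching $0$ would itself signal instability onset, on the whole stability interval below).

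For $u_+$ the conclusion follows at once: since $\cl_->0$, the pencil $\cj\cl\vec z=\la\vec z$ is equivalent for $\la\ne 0$ to $\cl_-^{1/2}\cl_+\cl_-^{1/2}w=-\la^2 w$, and the self--adjoint operator $\cl_-^{1/2}\cl_+\cl_-^{1/2}$ has exactly $n(\cl_+)=1$ negative eigenvalue by Sylvester's law of inertia; hence exactly one $\la^2>0$, that is one real positive eigenvalue (and no complex ones, $\la^2$ being forced real), with the rest of $\si(\cj\cl)$ on $i\rone$. For $u_-$ I would use the index count. Because $\ker\cl_-=\{0\}$, $\ker\cl=\mathrm{span}\{(u_-',0)^T\}$, and the generalized eigenvector $(0,\cl_-^{-1}u_-')^T$ produces the Jordan block, so $0$ has algebraic multiplicity two and geometric multiplicity one, provided the scalar $D:=\langle\cl_-^{-1}u_-',u_-'\rangle$ is nonzero. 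Since $u_-$ is even, $u_-'$ is odd and $\cl_-^{-1}$ enters $D$ only through its restriction to the odd sector, where $\cl_-$ sits strictly above its (even) ground state and is positive for $\al$ large; hence $D>0$ and $n(D)=0$. The Hamiltonian--Krein identity then gives $k_r+2k_c+2k_i^-=n(\cl)-n(D)=\big(n(\cl_+)+n(\cl_-)\big)-n(D)=(1+1)-0=2$, where $k_r,k_c,k_i^-$ count positive real, quadruplet--complex, and negatively--signed purely imaginary eigenvalues.

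Finally I would locate this spectrum by perturbing off $\al=\infty$, where $u_-=-\sech$ is the NLS soliton, $\si(\cj\cl)\subset i\rone$, and $0$ carries a four--dimensional generalized kernel from translation \emph{and} phase invariance. Switching on $h>0$ breaks the phase symmetry; the associated two--dimensional nilpotent block splits into a pair $\pm\mu(\al)$ with $\mu(\al)\to 0$, and since a real pair would contribute $k_r=1$ — incompatible with the parity of $k_r+2k_c+2k_i^-=2$ — this pair is purely imaginary, $\pm i\mu(\al)$. For $\al$ large, $\cj\cl$ is a small perturbation of the fully imaginary $\cj\cl|_{\al=\infty}$: the only eigenvalues that move are those bifurcating from $0$ (the pair just discussed, which stays on $i\rone$) and possibly from the band edge $\pm i\sqrt{(1-6\psi_0^2)(1-2\psi_0^2)}\to\pm i$ (which, if they detach, remain on $i\rone$), so no eigenvalue can leave $i\rone$; thus $k_r=k_c=0$, whence $k_i^-=1$, $\pm i\mu(\al)$ is the unique negatively--signed imaginary pair, and $\si(\cj\cl)\subset i\rone$. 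Let $\al^*$ be the infimum of those $\al_0$ with $\si(\cj\cl)\subset i\rone$ for all $\al>\al_0$; on $(\al^*,\infty)$ the Morse indices, $n(D)$ and $k_i^-=1$ persist, so spectral stability holds, and at $\al^*$ the first degeneration must — $n(\cl)-n(D)$ being locally constant — be a collision of $\pm i\mu(\al)$ either with the edge $\pm i\sqrt{(1-6\psi_0^2)(1-2\psi_0^2)}$ of $\si_{ess}$, or with a positively--signed pair $\pm i\tilde\mu(\al)$, $\mu(\al)<\tilde\mu(\al)$, that has bifurcated off the continuum; either way a quadruplet is shed and instability sets in for $\al<\al^*$, as observed numerically in \cite{B1}. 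The main obstacles are the sharp count $n(\cl_-)=1$ together with its persistence on $(\al^*,\infty)$, and the Lyapunov--Schmidt reduction at $\al=\infty$ that both identifies $\pm i\mu(\al)$ as purely imaginary with $\mu(\al)\downarrow 0$ and, in the second collision scenario, locates the companion $\tilde\mu(\al)$ and its positive Krein signature near $\al^*$; it is this last continuation down to the collision value that makes part of the statement necessarily conjectural.
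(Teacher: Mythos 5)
Your overall architecture (Weyl's theorem for $\si_{ess.}$, Morse indices of $\cl_\pm$, the Krein index count $k_r+2k_c+2k_i^-=n(\cl)-n(D)$, bifurcation off the $h=0$ NLS soliton, continuation to a collision) matches the paper's, and two of your sub-arguments are genuinely different and attractive: the Dirichlet-comparison proof that $n(\cl_-)\geq 1$ for $u_-$ (pairing $\cl_-(-u_-)=-h<0$ against the principal Dirichlet eigenfunction on $(-x_0,x_0)$) replaces the paper's explicit Mathematica evaluation of $\dpr{\cl_- u_-}{u_-}$, and the reduction of the $u_+$ problem to $\cl_-^{1/2}\cl_+\cl_-^{1/2}$ via Sylvester's law is a clean alternative to the index count. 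However, there is one genuine gap at the heart of the last assertion of the theorem. You conclude that at $\al^*$ the pair $\pm i\mu(\al)$ must collide either with the band edge or with a positively-signed pair $\pm i\tilde\mu(\al)$, but you never exclude the third scenario: $\mu(\al)\to 0$ as $\al\to\al^*+$, i.e.\ the pair returns to the origin (where the translational eigenvalue of algebraic multiplicity two already sits) and exits along the real axis. Nothing in the index count forbids this, since $n(\cl)-n(D)=2$ is consistent with $k_r=2$, $k_i^-=0$ after such an event. The paper devotes Proposition \ref{prop:99} precisely to this point: using uniform bounds \eqref{l:37} on $\la_0(\cl_{\pm,\al})$, $\la_1(\cl_{-,\al})$, $\la_2(\cl_{+,\al})$ on compact $\al$-intervals, it derives the coercivity estimate $0>\dpr{\cl z}{z}\geq \si_0-\tfrac{8\mu(\al)}{\si_0}$ from the eigenvalue equations and the negative Krein signature, which is contradictory for small $\mu(\al)$ and yields $\min_{\al^*<\al<\infty}\mu(\al)\geq\si_0>0$. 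Without some such quantitative lower bound your characterization of the collision is unproven.

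Two further, smaller gaps. First, your counts $n(\cl_-)=1$, $Ker[\cl_-]=\{0\}$ and $D=\dpr{\cl_-^{-1}u_-'}{u_-'}>0$ are justified only for large $\al$ (via norm-resolvent convergence to the P\"oschl--Teller operator), and the parenthetical ``a second eigenvalue of $\cl_-$ reaching $0$ would itself signal instability onset'' is circular; the continuation argument needs these facts on all of $(\al^*,\infty)$. The paper gets them for every $\al$ from the operator comparison $\cl_+<\cl_-$ (so $n(\cl_-)\leq n(\cl_+)=1$), the observation that a zero eigenfunction of $\cl_-$ would be an odd second eigenfunction orthogonal to the bell-shaped ground state of $\cl_+$, forcing $n(\cl_+)\geq 2$, and the positivity of $\cl_-^{-1}$ on $\{W\}^\perp$ where $W$ is the even ground state. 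Second, you assert that the nilpotent modulational block actually splits for $h>0$ with $\mu(\al)\neq 0$; the paper proves this by the explicit $\sqrt{h}$-scaled Lyapunov--Schmidt reduction \eqref{100} yielding $\mu_0^2=\dpr{V_-u_0}{u_0}/\dpr{\cl_+^{-1}u_0}{u_0}$, with the sign of $\dpr{V_-u_0}{u_0}$ pinned down by the same parity-of-$k_r$ contradiction you invoke. Your persistence step would also need the solvability condition $\dpr{z_0}{\cj z_0}\neq 0$ (a consequence of the negative Krein signature), which is the engine of the paper's Proposition \ref{prop:98} and is absent from your outline.
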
 
{\bf Remarks:} 
\begin{itemize}
	\item The results in \cite{B1} already contain rigorous analysis for the instability of $u_+$. 
	\item Regarding the stability of $u_-$, some heuristic arguments were presented in \cite{B1}, which were complemented by numerical simulations. 
	\item We do not present a rigorously established  mechanism for the instability formation. However, from our arguments,  it is confirmed that the waves remain spectrally stable, till the pair of purely imaginary/neutral eigenvalues $\pm i \mu(\al)$ hits the edge of the continuous  spectrum or the pair $\pm i \tilde{\mu}(\al)$. According to the numerical simulations in \cite{B1} (but also the perturbative calculations in \cite{BBK}, which confirm that a pair of purely  imaginary eigenvalues is peeled off the essential spectrum, as $h>0$),  an instability is triggered by a  collision of the pairs $\pm i \mu(\al)$  and $\pm i \tilde{\mu}(\al)$, after a which a quartet of eigenvalues (of which two have a negative real part, while the other two have positive real part, hence create instability) is formed in the complex plane. 
	\item According to the numerics in \cite{B1}, the second alternative occurs, namely $\pm i \mu(\al)$ hits another eigenvalue $\pm i \tilde{\mu}(\al)$ and exits the imaginary axes after as a modulational instability. In fact, the collision between $i \mu(\al)$ and $i \tilde{\mu}(\al)$ happens at $\al^*\sim 2.5327$ or $h^*\sim 0.07749$, \cite{B1}. 
\end{itemize}
 We plan on presenting the proof of Theorem \ref{theo:10}, which consists of several different claims,  in 
	the following steps. We show that $u_+$ is spectrally unstable in Section \ref{sec:3.1}. The claims about $\si_{ess.}(\cj\cl)$ is in Lemma \ref{le:14}. Regarding the wave $u_-$, the claims about the point spectrum $\si-{pt.}(\cj\cl)$ for small $h$ are in Section \ref{sec:3.2}, the precise result is stated in Proposition \ref{prop:54}. Then, the tracking for larger values of $h$, more specifically the alternative for  developing eventual complex instabilities, is explored in Proposition \ref{prop:99}.  In preparation for this, Proposition \ref{prop:98} however shows that no instability occurs, before 1) collision with another eigenvalue or 2) collision with continuous spectrum.   
 \section{Preliminaries}
 We now discuss the basics of the instability index theory. 
\subsection{Instability index theory} 
We use the instability index count theory, as developed in \cite{KKS,KKS2}. We present a corollary, which is enough  for  our purposes. For eigenvalue problem in the form \eqref{40}, 
  we assume that $\cl$ has a finite number of negative eigenvalues, $n(\cl)$ and $\cj^{-1}: Ker[\cl]\to Ker[\cl]^\perp$.  

Let $k_r$  be  the number of positive eigenvalues of \eqref{40}, $k_c$ be the number of 4 tuples of eigenvalues with non-zero real and imaginary parts\footnote{as any eigenvalue $\la: \Re \la\neq 0, \Im \la\neq 0$ will join $\si_{p.p.}(\cj\cl)$, together with $-\la, \bar{\la}, -\bar{\la}$, due to Hamiltonian symmetries} and $k_i^-$, the number of pairs of purely imaginary eigenvalues with negative Krein signature.  For a simple pair of imaginary eigenvalues $\pm i \mu$, and the corresponding eigenvector 
$\vec{z} = \left(\begin{array}{c}
z_1  \\ z_2
\end{array}\right): \cj \cl \vec{z} = i \mu \vec{z}$, the Krein signature is 
$
sgn(\dpr{ \cl  \vec{z}}{ \vec{z}})
$
 see \cite{KKS}, p. 267. That is, we say that the signature is negative, if $\dpr{ \cl  \vec{z}}{ \vec{z}}<0$. We note that since  the self-adjoint operator $\cl: Ker[\cl]^\perp\to Ker[\cl]^\perp$, we can define properly $\cl^{-1}: Ker[\cl]^\perp\to Ker[\cl]^\perp$. 
 
We are now ready to introduce  a matrix $D$. Namely,  picking  a basis for  $Ker[\cl]$, say $Ker[\cl]=span \{\phi_1, \ldots, \phi_n\}$, set 
\begin{equation}
\label{dij}
D_{i j}:=\dpr{\cl^{-1} [\cj^{-1} \phi_i] }{\cj^{-1} \phi_j}.
\end{equation}
Note that the last formula makes sense, since $ \cj^{-1} \phi_i \in Ker[\cl]^\perp$, whence  $\cl^{-1}[\cj^{-1} \phi_i]\in Ker[\cl]^\perp$ is well-defined.  Note that we shall use the Morse index notation, namely for a self-adjoint, bounded from below operator $S$, with finitely many negative eigenvalues, denote 
$$
n(S)=\{\la<0: \la\in\si_{p.p.}(S)\},
$$
where the eigenvalues are counted with their respected multiplicities. 
The index counting theorem, see Theorem 1, \cite{KKS2} states that if $det(D)\neq 0$, then
\begin{equation}
\label{e:20}
k_r+2 k_c+2 k_i^-= n(\cl)-n(D). 
\end{equation}
  Note that the purely imaginary eigenvalues with negative Krein signatures play an important role in the instability formation - one manifestation of that is the formula \eqref{e:20}.  For example, one observes  that the law \eqref{e:20} allows for configurations  with $k_i^-=1, k_c=0$ that may  be transformed, as parameters vary,  into a case where $k_i^-=0, k_c=1$. 
 
 Indeed, a well-established mechanism of generation of instabilities is the collision of a  eigenvalue of negative Krein signature with a eigenvalue of positive Krein signature. Such collisions, may (and usually do) give birth to  a pair of complex eigenvalues, one with positive real part (hence the instability) and one with a negative one. We identify this below as a potential   mechanism of instability, and numerics in \cite{B1} indeed confirm that this is the case. 

Next, we discuss some specific spectral results about the linearized operators involved in the eigenvalue problem \eqref{40}.
\subsection{Some preliminary spectral results}
\begin{lemma}
	\label{le:14} 
	The essential spectrum of $\cj \cl$ is given by 
		$$
			\si_{ess.}(\cj\cl) = \{i\la:\la\in \rone,  |\la|\geq \sqrt{(1-6\psi_0^2(\al))(1-2\psi_0^2(\al))}\}.
		$$
\end{lemma}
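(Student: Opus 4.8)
The plan is to compute the essential spectrum of the matrix operator $\cj\cl$ via Weyl's theorem, by passing to the constant-coefficient operator obtained at $x\to\pm\infty$ and reading off its spectrum through the Fourier transform. First I would recall that $u_\pm(x)\to\psi_0$ as $x\to\pm\infty$, so the potentials $6u_\pm^2$ and $2u_\pm^2$ appearing in $\cl_+$ and $\cl_-$ converge exponentially fast (rate governed by $A>0$) to the constants $6\psi_0^2$ and $2\psi_0^2$ respectively. Hence $\cj\cl$ is an exponentially small (relatively compact) perturbation of the constant-coefficient operator
\[
\cj\cl_\infty = \begin{pmatrix} 0 & 1 \\ -1 & 0\end{pmatrix}\begin{pmatrix} -\p_{xx}+1-6\psi_0^2 & 0 \\ 0 & -\p_{xx}+1-2\psi_0^2\end{pmatrix},
\]
and by Weyl's theorem $\si_{ess.}(\cj\cl)=\si_{ess.}(\cj\cl_\infty)=\si(\cj\cl_\infty)$, the last equality because the constant-coefficient operator has no isolated eigenvalues on $L^2(\rone)$.

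Next I would diagonalize $\cj\cl_\infty$ on the Fourier side. Writing $a:=1-6\psi_0^2$, $b:=1-2\psi_0^2$ (both positive since $\psi_0^2<1/6$), the symbol of $\cj\cl_\infty$ at frequency $\xi$ is the $2\times2$ matrix
\[
M(\xi)=\begin{pmatrix} 0 & \xi^2+b \\ -(\xi^2+a) & 0\end{pmatrix},
\]
whose eigenvalues solve $\la^2 = -(\xi^2+a)(\xi^2+b)$, i.e. $\la = \pm i\sqrt{(\xi^2+a)(\xi^2+b)}$. As $\xi$ ranges over $\rone$, the quantity $\sqrt{(\xi^2+a)(\xi^2+b)}$ ranges over $[\sqrt{ab},\infty)=[\sqrt{(1-6\psi_0^2)(1-2\psi_0^2)},\infty)$, so
\[
\si(\cj\cl_\infty)=\{i\la:\la\in\rone,\ |\la|\geq\sqrt{(1-6\psi_0^2)(1-2\psi_0^2)}\},
\]
which is exactly the claimed set. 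To make this rigorous as the $L^2$-spectrum (not merely a formal symbol computation) I would note that $\la\in\si(\cj\cl_\infty)$ iff $M(\xi)-\la I$ fails to be boundedly invertible for $\xi$ in a set of positive measure, which reduces to $\det(M(\xi)-\la I)=\la^2+(\xi^2+a)(\xi^2+b)=0$ having real solutions $\xi$; this happens precisely when $\la\in i\rone$ with $|\la|\geq\sqrt{ab}$.

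The main obstacle — really the only nontrivial point — is justifying the application of Weyl's essential-spectrum stability theorem to a non-self-adjoint (Hamiltonian) matrix operator: one must check that the difference $\cj\cl-\cj\cl_\infty$, which is multiplication by a matrix with exponentially decaying entries, is relatively compact with respect to $\cj\cl_\infty$ (equivalently, that it is $\cj\cl_\infty$-bounded with relative bound zero and maps bounded sets into precompact ones after resolving with $(\cj\cl_\infty - z)^{-1}$). This follows from the exponential decay of the potentials together with the fact that $(\cj\cl_\infty-z)^{-1}$ gains two derivatives, so a Rellich-type argument on a Sobolev space of weighted functions applies; there is a slight subtlety in that $\cj\cl_\infty$ is not sectorial, but its spectrum lies on the imaginary axis with a spectral gap around $0$ away from $[\sqrt{ab},\infty)i$, so resolvents exist off the imaginary axis and the standard perturbation machinery (e.g. as in the treatment of essential spectra for Hamiltonian systems) goes through. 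Finally I would remark that $\cj\cl_\infty$ has purely essential spectrum on $L^2(\rone)$ — no embedded point spectrum — because any $L^2$ solution of $\cj\cl_\infty\vec z=\la\vec z$ would have to be a bounded exponential, forcing $\vec z\equiv 0$; this gives the claimed identity $\si_{ess.}(\cj\cl)=\si(\cj\cl_\infty)$.
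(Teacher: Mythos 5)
Your proposal is correct and follows essentially the same route as the paper: reduce to the constant-coefficient operator at infinity via Weyl's theorem (the exponentially decaying potential difference being relatively compact) and then read off the spectrum from the $2\times 2$ Fourier symbol, whose characteristic equation $\la^2+(\xi^2+1-6\psi_0^2)(\xi^2+1-2\psi_0^2)=0$ yields exactly the claimed set. The only cosmetic difference is that the paper phrases the symbol computation as non-invertibility of $\cl_0+i\la\cj$ rather than diagonalizing the symbol of $\cj\cl_0$ directly, which is equivalent.
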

\begin{proof}
Since $\lim_{x\to \pm \infty} u_\pm \to \psi_0$, we can write 	the operators (with $u=u_+$ or $u=u_-$)
\begin{eqnarray*}
\cl_+ &=&-\p_{xx}+1-6 u^2=-\p_{xx}+1-6\psi_0^2 - 6V\\
\cl_- &=& -\p_{xx}+1-2 u^2=-\p_{xx}+1-2\psi_0^2 - 2V
\end{eqnarray*} where $V=u^2-\psi_0^2$ have  exponential decay at $\pm \infty$. By Weyl's theorem, 
$$
	\si_{ess.}(\cj\cl) = \si(\cj \left(\begin{array}{cc} 
	-\p_{xx}+1-6\psi_0^2 & 0 \\
	0 & -\p_{xx}+1-2\psi_0^2
	\end{array}\right)).
$$
	In anticipation that the spectrum is inside of $i \rone$, 
	we set up the spectral  problem as the non-invertibility of the matrix operator 
	$
	\cj \left(\begin{array}{cc} 
	-\p_{xx}+1-6\psi_0^2 & 0 \\
	0 & -\p_{xx}+1-2\psi_0^2
	\end{array}\right) - i \la ,
	$
	which is equivalent to the non-invertibility of 
	$\left(\begin{array}{cc} 
	-\p_{xx}+1-6\psi_0^2 & 0 \\
	0 & -\p_{xx}+1-2\psi_0^2
	\end{array}\right)+i \la \cj$.
	By Fourier transform arguments, we need 
	$$
	\det\left(\begin{array}{cc} 
	k^2+1-6\psi_0^2 & i \la \\
	-i \la & k^2+1-2\psi_0^2
	\end{array}\right)=0,
	$$
	for some $k\in\rone$.  Note that due to the restriction $\psi_0^2<\f{1}{6}$(see \eqref{60}), the diagonal entries of the matrix are positive for each $k\in\rone$. 
	
Thus, $i\la\in  \si(\cj \left(\begin{array}{cc} 
	-\p_{xx}+1-6\psi_0^2 & 0 \\
	0 & -\p_{xx}+1-2\psi_0^2
	\end{array}\right))$ if and only if for some $k\in\rone$, 
	$$
	\la^2=(k^2+1-6\psi_0^2)(k^2+1-2\psi_0^2), 
	$$
	In other words, the continuous spectrum fills up the imaginary axes, with the exception of the segment from 
	$(-i \sqrt{(1-6\psi_0^2)(1-2\psi_0^2)}, i \sqrt{(1-6\psi_0^2)(1-2\psi_0^2)})$. 
	$$
	\si_{ess.}(\cj\cl)=\{\pm i \la: \la\geq \sqrt{(1-6\psi_0^2)(1-2\psi_0^2)}\}. 
	$$
\end{proof}
 {\bf Remark:} It may be an interesting exercise  to write down the spectrum of $\si(\cj\cl)$ for the case $\al=0$. In such a case, we are dealing with constant coefficient Schr\"odinger operators $\cl_\pm$, with $i\la\in \si(\cj\cl)=\si_{ess.}(\cj\cl)$ given by  $\la^2=k^2(k^2+\f{2}{3}), k\in \rone$, whence we obtain the formula
	$
	\si(\cj\cl)=\si_{ess.}(\cj\cl)=i \rone. 
	$

The next issue that we need to address is about the solvability of a  linear  problem of the type 
\begin{equation}
\label{e:10} 
(\cj\cl - i \mu) z=f,
\end{equation}
where the spectral parameter $i \mu$  is outside of the continuous  spectrum range, i.e. assuming that  $\mu \in (-\sqrt{(1-6\psi_0^2)(1-2\psi_0^2)},\sqrt{(1-6\psi_0^2)(1-2\psi_0^2)})$. 

We have the following Fredholm alternative type statement for the linear problem \eqref{e:10}. 
\begin{lemma}
	\label{le:22} 
	Let $\mu \in (-\sqrt{(1-6\psi_0^2)(1-2\psi_0^2)},\sqrt{(1-6\psi_0^2)(1-2\psi_0^2)})$. Let $\cj, \cl$ are as in \eqref{35} and $i\mu\in \si_{p.p.}(\cj\cl)$ is a simple eigenvalue, with an eigenfunction $z_0: (\cj \cl - i \mu) z_0=0$. 
	
	    Given $f\in L^2(\rone)$,  the  linear problem \eqref{e:10} has a solution, if and only if 
	    $\dpr{f}{\cj z_0}=0$. 
\end{lemma}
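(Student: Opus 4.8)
The plan is to establish this as a standard Fredholm alternative for the operator $\cj\cl - i\mu$ on $L^2(\rone)$, exploiting the fact that $\cj\cl$ is a relatively compact perturbation of a constant-coefficient operator and that $i\mu$ lies in a spectral gap of the essential spectrum. First I would observe, using Lemma \ref{le:14}, that since $\mu$ lies strictly inside the gap $(-\sqrt{(1-6\psi_0^2)(1-2\psi_0^2)},\sqrt{(1-6\psi_0^2)(1-2\psi_0^2)})$, the point $i\mu$ is not in $\si_{ess.}(\cj\cl)$; hence $\cj\cl - i\mu$ is Fredholm of index zero on $L^2$, and its range is closed with $\mathrm{codim}\,\mathrm{Ran}(\cj\cl - i\mu) = \dim\mathrm{Ker}((\cj\cl - i\mu)^*)$. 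The necessity direction is then immediate: if $(\cj\cl - i\mu)z = f$ and $z_0$ spans the kernel, then pairing with $\cj z_0$ and using that $(\cj\cl)^* = -\cl\cj = \cj^{-1}(\cj\cl)\cj$ in the appropriate sense — more concretely, computing $\dpr{f}{\cj z_0} = \dpr{(\cj\cl - i\mu)z}{\cj z_0} = \dpr{z}{(\cl\cj - \bar{i\mu}\,)\cj z_0}$ and noting $\mu$ is real — one finds $\dpr{f}{\cj z_0} = -\dpr{z}{\cl(\cj\cl+i\mu)^{-1}\cdots}$; cleaner is to check directly that $\cj z_0$ spans $\mathrm{Ker}((\cj\cl-i\mu)^*)$ and invoke the closed range theorem.

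The cleanest route for the adjoint computation is this: $(\cj\cl)^\dagger$ with respect to the standard inner product satisfies $(\cj\cl)^\dagger = \cl^\dagger \cj^\dagger = \cl(-\cj) = -\cl\cj$. Now $-\cl\cj (\cj z_0) = -\cl\cj^2 z_0 = \cl z_0$, while we want to relate this to $\overline{i\mu}\,\cj z_0 = -i\mu\,\cj z_0$. Since $(\cj\cl - i\mu)z_0 = 0$ gives $\cj\cl z_0 = i\mu z_0$, applying $\cj^{-1} = -\cj$ yields $\cl z_0 = i\mu \cj^{-1} z_0 = -i\mu\,\cj z_0$. Therefore $(\cj\cl)^\dagger(\cj z_0) = \cl z_0 = -i\mu\,\cj z_0 = \overline{i\mu}\,(\cj z_0)$, so $\cj z_0 \in \mathrm{Ker}((\cj\cl - i\mu)^\dagger)$, and since $i\mu$ is a simple eigenvalue of $\cj\cl$ the Fredholm index-zero property forces $\mathrm{Ker}((\cj\cl - i\mu)^\dagger)$ to be exactly one-dimensional, spanned by $\cj z_0$. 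The Fredholm alternative then gives: \eqref{e:10} is solvable if and only if $f \perp \mathrm{Ker}((\cj\cl - i\mu)^\dagger) = \mathrm{span}\{\cj z_0\}$, i.e. iff $\dpr{f}{\cj z_0} = 0$.

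The step I expect to require the most care is the verification that $i\mu$ is genuinely outside $\si_{ess.}(\cj\cl)$ \emph{and} that $\cj\cl - i\mu$ is Fredholm of index zero there — one must confirm that the decaying potential $V = u^2 - \psi_0^2$ really does produce a relatively compact (in the resolvent sense) perturbation of the constant-coefficient matrix operator, so that the Weyl-type stability of the essential spectrum applies with index preservation. This is routine given the exponential decay of $V$ noted in Lemma \ref{le:14}, but it is the place where the spectral-gap hypothesis on $\mu$ is actually used. A secondary subtlety is that $\cj\cl$ is not self-adjoint, so one cannot use the symmetric Fredholm alternative directly; the argument must go through the Banach-space closed range theorem with the genuine adjoint $-\cl\cj$, which is exactly what the computation above supplies. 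Finally I would note that simplicity of the eigenvalue $i\mu$ is what guarantees the orthogonality condition is a single scalar equation rather than a system.
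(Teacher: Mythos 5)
Your proposal is correct and follows essentially the same route as the paper: both arguments are Fredholm-alternative arguments that use the spectral-gap hypothesis to make the constant-coefficient part invertible, the exponential decay of $V$ to get a compact perturbation, and the simplicity of $i\mu$ together with the adjoint identity $(\cj\cl)^*=-\cl\cj$ to identify the cokernel with $\mathrm{span}\{\cj z_0\}$. The only difference is presentational: the paper explicitly reduces to $(I-\ck)z=\tilde f$ with $\ck=(\cl_0+i\mu\cj)^{-1}\cv$ compact and computes $Ker(I-\ck^*)$, whereas you invoke the index-zero Fredholm property of $\cj\cl-i\mu$ directly and verify $\cj z_0\in Ker((\cj\cl-i\mu)^*)$, which is a slightly cleaner packaging of the same computation.
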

\begin{proof}
	The necessity of this condition is easy, since if we have solution of \eqref{e:10}, it suffices to take dot product of it with $\cj z_0$. We obtain 
	$$
	\dpr{f}{\cj z_0}=\dpr{(\cj\cl - i \mu) z}{\cj z_0}=\dpr{z}{\cl z_0+i \mu \cj z_0}=0. 
	$$
	The sufficiency part relies on the Fredholm properties of the operators. More specifically, write 
	$$
	\cl_0=\left(\begin{array}{cc} 
	-\p_{xx}+1-6\psi_0^2 & 0 \\
	0 & -\p_{xx}+1-2\psi_0^2
	\end{array} \right), \ \ \cv=\left(\begin{array}{cc} 
	6 V_+ & 0 \\
	0 & 2 V_-
	\end{array} \right), 
	$$
	so that \eqref{e:10} can be recast in the equivalent form 
	\begin{equation}
	\label{e:12} 
	(\cl_0 + i \mu\cj -\cv)z=-\cj f, 
	\end{equation}
	Due to the fact that $|\mu|<\sqrt{(1-6\psi_0^2)(1-2\psi_0^2)}$, we have that 
	$\cl_0 + i \mu\cj$ is invertible, so we can further rewrite \eqref{e:12} equivalently as 
	\begin{equation}
	\label{e:15} 
	(I -(\cl_0 + i \mu\cj)^{-1}\cv)z=-(\cl_0 + i \mu\cj)^{-1} \cj f=:\tilde{f}
	\end{equation}
	Now, due to the fact that $\cv$ is smooth and exponentially decaying (matrix) potential, while $(\cl_0 + i \mu\cj)^{-1}:L^2(\rone)\times L^2(\rone) \to H^2(\rone)\times H^2(\rone)$, with exponentially decaying kernel,  we have that $\ck:=(\cl_0 + i \mu\cj)^{-1}\cv:L^2\times L^2 \to L^2\times L^2$ is a compact operator. As such, the operator equation \eqref{e:15} is in the Fredholm alternative form $(id-\ck) z= \tilde{f}$. Therefore, it has solution, if $\tilde{f}\perp Ker(I-\ck^*)$.
	
	 We claim that under our assumptions, $Ker(I-\ck^*)=span[(\cl_0+i \mu \cj) z_0]$. Indeed, let $z^*\in Ker(I-\ck^*)$. We have that 
	$
	z^*=\cv (\cl_0 + i \mu\cj)^{-1} z^*
	$
	Letting $\eta^*:=(\cl_0 + i \mu\cj)^{-1} z^*$, we conclude that 
	$
	(\cl_0+i \mu \cj)\eta^*=\cv \eta^*,
	$
	whence 
	$$
	(\cj\cl-i \mu)\eta^*=(\cj(\cl_0-\cv) - i \mu)\eta^*=0.
	$$
	Since we have assumed that $i\mu$ is a simple eigenvalue, it follows that $\eta^*=c z_0$, whence $Ker(I-\ck^*)=span[(\cl_0+i \mu \cj) z_0]$. Thus, the solvability condition can be written as 
	$$
	0=\dpr{\tilde{f}}{(\cl_0+i \mu \cj) z_0}=\dpr{(\cl_0+i \mu \cj) \tilde{f}}{z_0}=-\dpr{\cj f}{z_0}=\dpr{f}{\cj z_0}. 
	$$
	
\end{proof}

Next, we provide  some properties of the linearized operators $\cl_\pm$, which will be useful in the sequel. 
\subsection{Properties of the linearized operators $\cl_\pm$} 
Note that $\cl_\pm$ are standard self - adjoint \\ Schr\"odinger operators with even potentials vanishing at $\infty$. 
In fact, since 
$u_\pm\to \psi_0$ as $x\to \pm \infty$, we have that $\cl_+ =-\p_{xx}+1- 6\psi_0^2 -6V_+$, 
$\cl_-=-\p_{xx}+1-2\psi_0^2 -2V_-$, whence 
$$
\si_{ess.} (\cl_+)= [1-6\psi_0^2, +\infty), \si_{ess.} (\cl_-)= [1-2\psi_0^2, +\infty) 
$$
Note that due to $\psi_0^2<\f{1}{6}$ in Proposition \ref{prop:10}, it follows that  $\si_{ess.}(\cl_\pm)\subset[1-6\psi_0^2, \infty)\subset (0, \infty)$. 

We have the following Proposition,  which collects some  pertinent spectral properties of $\cl_\pm$. 
\begin{proposition}
	\label{prop:20} 
	Let $h\in (0, \f{2}{3\sqrt{6}})$. Then, the  operators $\cl_\pm$ are self-adjoint, with domain $H^2(\rone)$. In addition, 
	$\si_{ess.}(\cl_\pm) \subset (0, \infty)$, $\cl_+[u'_\pm]=0$, so that $0\in \si_{p.p.}(\cl_+)$. 
	\begin{itemize}
		\item For the case $u_+$, $\cl_+$ has exactly one negative eigenvalue, while $\cl_->0$. 
		\item 
		For the case $u_-$, $\cl_+$ has exactly one negative eigenvalue,  while $\cl_-$ also has exactly one negative eigenvalue, and $0\notin\si_{p.p.}(\cl_-)$. 
	\end{itemize}
\end{proposition}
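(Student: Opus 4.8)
The plan is to establish the self-adjointness and essential-spectrum claims first (these are immediate from the preceding discussion), then to count negative eigenvalues of $\cl_+$ and $\cl_-$ by exploiting the explicit form of $u_\pm$ in Proposition \ref{prop:10}. The self-adjointness on $H^2(\rone)$ follows since $\cl_\pm$ are Schr\"odinger operators with bounded, smooth, exponentially localized potential perturbations of $-\p_{xx}+c$; the description $\si_{ess.}(\cl_\pm)\subset(0,\infty)$ is exactly what was recorded above using $\psi_0^2<\f16$. The identity $\cl_+[u_\pm']=0$ is obtained by differentiating the profile equation \eqref{11} in $x$, which shows $0\in\si_{p.p.}(\cl_+)$ in both cases. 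Since $u_\pm' $ has exactly one zero (the profiles $\varphi_\pm$ are even, unimodal in $x$, with a single critical point at $x=0$), Sturm oscillation theory tells us $u_\pm'$ is the \emph{second} eigenfunction of $\cl_+$, hence $\cl_+$ has exactly one negative eigenvalue and $0$ is simple for $\cl_+$. This handles the $\cl_+$ statements uniformly for both $u_+$ and $u_-$.

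For $\cl_-$ I would use the factorization/ground-state trick. Note $\cl_- u_\pm = -u_\pm'' + u_\pm - 2u_\pm^2\cdot u_\pm = -u_\pm''+u_\pm - 2u_\pm^3$; using \eqref{11} in the form $-u_\pm'' + u_\pm - 2u_\pm^2 u_\pm = h$ we get $\cl_- u_\pm = h$. So $u_\pm$ is not in the kernel of $\cl_-$ (as $h>0$), but it satisfies $\cl_- u_\pm = h > 0$ with $u_\pm>0$ everywhere. Since $\cl_-$ is a Schr\"odinger operator, its lowest eigenvalue (if it exists below the essential spectrum) is simple with a positive ground state $\phi_0$. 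Pairing $\cl_-\phi_0=\mu_0\phi_0$ against $u_\pm>0$ gives $\mu_0\dpr{\phi_0}{u_\pm} = \dpr{\phi_0}{\cl_- u_\pm}=h\dpr{\phi_0}{1}$. The left side has the sign of $\mu_0$; the right side is $h$ times $\int\phi_0$. For $u_-$ one computes (using the explicit $\psi_0,\varphi_-$) that $\int_{\rone}\phi_0$ has a definite sign forcing $\mu_0<0$, and one must further rule out a second negative eigenvalue. The cleanest route for the latter is to produce a test function orthogonal considerations, or better: observe $\cl_- = \cl_+ + 4u_\pm^2$, so $\cl_-\geq \cl_+$; hence $n(\cl_-)\leq n(\cl_+) = 1$ by the min-max principle. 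Combined with the existence of \emph{one} negative eigenvalue for $u_-$, this gives exactly one; and $0\notin\si_{p.p.}(\cl_-)$ must be argued separately.

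For the existence of a negative eigenvalue of $\cl_-$ in the $u_-$ case, the concrete approach is to exhibit a trial function $\psi$ with $\dpr{\cl_-\psi}{\psi}<0$; the natural candidate is $\psi = u_-$ itself or a cutoff/modification of $\varphi_-$. We have $\dpr{\cl_- u_-}{u_-} = h\int u_- $, which is positive, so $u_-$ does not work directly; instead I would use $v = \p_\psi u_-$ or the derivative of $u_-$ with respect to the parameter $\al$ (equivalently $h$), which by differentiating $\cl_- u_- = h$ in the parameter yields $\cl_-(\p u_-) = \p h - (\text{nonneg. terms})$, generating the needed negative direction — alternatively the simplest rigorous choice is a localized bump near $x=0$ where the potential $-2u_-^2$ is most negative (there $u_-(0) = \psi_0(1+2\sinh^2\al/(1-\cosh\al))$, and for $\al$ in the admissible range $1-\cosh\al<0$, so $u_-(0)$ can be large, making $1-2u_-(0)^2<0$, guaranteeing a negative eigenvalue by localizing). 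To show $0\notin\si_{p.p.}(\cl_-)$: a kernel element would be an even or odd $H^2$ solution of $\cl_- w=0$; oddness is ruled out because the only decaying odd candidate would be forced by Sturm theory to sit above the (negative) ground state and below/equal the next, and one checks no such zero mode exists; evenness is ruled out by a Wronskian/ODE argument against the explicit potential, or by noting that $\cl_- u_- = h\neq 0$ together with the fact that $\cl_-$ restricted to even functions would then be injective on the relevant subspace. The main obstacle, and where the real work lies, is precisely the $u_-$ analysis of $\cl_-$: confirming that exactly one negative eigenvalue appears (not zero, not two) and that $0$ is not an eigenvalue, uniformly over the admissible parameter range — this requires genuine use of the explicit formulas \eqref{20}–\eqref{60} rather than soft arguments, most likely via an explicit computation of the relevant eigenvalue problem (which for $\sech^2$-type potentials is classically solvable) or a careful Sturm/Wronskian count; I would expect to reduce $\cl_-$ on the $u_-$ profile to a solvable hypergeometric/associated-Legendre problem and read off the spectrum directly.
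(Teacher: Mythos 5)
Your treatment of $\cl_+$ (differentiating \eqref{11}, Sturm--Liouville on the single zero of $u_\pm'$) and the bound $n(\cl_-)\le n(\cl_+)=1$ from $\cl_-=\cl_++4u_\pm^2\ge\cl_+$ coincide with the paper, and your ground-state pairing $\mu_0\dpr{\phi_0}{u_+}=h\int\phi_0>0$ is essentially the paper's proof that $\cl_->0$ for $u_+$ (the paper additionally inserts a cutoff $\zeta_N$ to justify the integration by parts, since $u_+\to\psi_0\ne0$ is not in $L^2$; you should do the same). The problems are all in the $u_-$ case. First, your premise ``$u_\pm>0$ everywhere'' is false for $u_-$: from \eqref{20}, $\varphi_-(0)=2\sinh^2\al/(1-\cosh\al)=-2(\cosh\al+1)$, so $u_-(0)=-\psi_0(1+2\cosh\al)<0$ and $u_-$ changes sign. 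This is not a side issue --- it is precisely why the pairing argument gives a different answer for $u_-$, and your suggested fix (``$\int\phi_0$ has a definite sign forcing $\mu_0<0$'') does not work: the identity reads $\mu_0\dpr{\phi_0}{u_-}=h\int\phi_0>0$, and without knowing the sign of $\dpr{\phi_0}{u_-}$ you learn nothing about $\mu_0$.

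Second, your fallback for producing the negative eigenvalue --- ``$1-2u_-(0)^2<0$, guaranteeing a negative eigenvalue by localizing'' --- is not a valid argument: $\si_{ess.}(\cl_-)=[1-2\psi_0^2,\infty)$ with $1-2\psi_0^2>0$, so a region where the potential dips below zero only guarantees (by the 1D weak-coupling bound-state principle) an eigenvalue below $1-2\psi_0^2$, not below $0$; a sufficiently shallow or narrow well binds only at a positive energy. A quantitative variational estimate is required, and the paper supplies one via the exact algebraic identity $\cl_-\varphi_-=2\psi_0^2\varphi_-(2+\varphi_-)$ (note $\varphi_-\in L^2$, unlike $u_-$), followed by an explicit evaluation showing $\dpr{\cl_-\varphi_-}{\varphi_-}=2\psi_0^2\int\varphi_-^2(2+\varphi_-)\,dx<0$ for all admissible $\al$ --- this works because $2+\varphi_-\le 2-2(\cosh\al+1)<0$ near $x=0$. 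Finally, your exclusion of $0\in\si_{p.p.}(\cl_-)$ is only a list of possible strategies; the paper's actual argument is short and you should reproduce something like it: if $\cl_-Q=0$ then, since $n(\cl_-)=1$, $Q$ is the second eigenfunction, hence odd with a single zero, hence orthogonal to the bell-shaped ground state of $\cl_+$; then $\dpr{\cl_+Q}{Q}<\dpr{\cl_-Q}{Q}=0$ together with the min--max principle forces $\la_1(\cl_+)<0$ and $n(\cl_+)\ge2$, contradicting $n(\cl_+)=1$.
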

\begin{proof}
	Differentiating the profile equation \eqref{11}, implies 
	that $\cl_+[u_+']=0$. In addition, $u_+'$ has exactly one zero, at $x=0$. By Sturm-Liouville's criteria, zero is a simple eigenvalue, which is the second smallest eigenvalue.  So, there is exactly one negative eigenvalue for $\cl_+$. 
	
	Consider the case $u=u_+$. 
For  the operator $\cl_-$, clearly $\cl_->\cl_+$, so $\cl_-$ has at most one negative eigenvalue.  
  We will show that\footnote{this has already been proved in \cite{B1}, but we provide a direct, independent proof herein.} 
$n(\cl_-)=0$. 
	Note that the profile equation \eqref{11} is equivalent to $\cl_-[u_+]=h>0$. Assume for a contradiction that for some $\eta: \cl_-\eta=-\si^2 \eta$, $\si>0$. Since $\eta$ will be ground state for $\cl_-$, it follows that $\eta>0$ and $\eta$  will have exponential decay, in fact $\eta(x)\leq C e^{-|x| \sqrt{1-2\psi_0^2} }$. 
	Informally, we obtain the contradiction as follows 
	$$
	0 <h\int_{\rone} \eta(x) dx= \left<h, \eta \right> =\left< \cl_{-}u_{+}, \eta\right> = \left< u_{+}, \cl_{-} \eta \right> = -\sigma^2\left<u_{+}, \eta \right> < 0,
	$$
	since $u_+>0$. 
	
	Formally, fix a cut-off function, say $\zeta\in C^\infty_0(\rone), \zeta>0:  supp \zeta\subset (-2,2), \zeta(x)=1, |x|<1$ and a large real $N$. Let $\zeta_N(x):=\zeta(x/N)$. Compute 
	\begin{eqnarray*}
		0< h\dpr{1}{\eta}= \lim_{N\to \infty} \dpr{\zeta_N\cl_- u_+}{\eta} = \lim_{N\to \infty} 
		\dpr{u_+}{\cl_-[\zeta_N \eta]}.
	\end{eqnarray*} 
	Now, since 
	$$
	\cl_-[\zeta_N \eta]=\zeta_N \cl_-[\eta]-2 \zeta_N' \eta' -  \zeta_N''  \eta = 
	-\si^2 \zeta_N \eta+O_{L^2} (N^{-1}), 
	$$
	as $\zeta_N', \zeta_N''=O(N^{-1})$. 
	We compute 
	$$
	\lim_{N\to \infty}  	\dpr{u_+}{\cl_-[\zeta_N \eta]} = -\sigma^2\left<u_{+}, \eta \right> <0,
	$$
	which is a contradiction. The case, $\si=0$ is also contradictory, by the same argument, now that we know that there are no negative eigenvalues and  zero  must be the bottom of the spectrum, again an impossibility. 
	
	For the case $u=u_-$, we have again $n(\cl_-)\leq 1$, since $n(\cl_+)=1$ and $\cl_+<\cl_-$. 
	On the other hand,   by direct inspection, 
	$$
	\cl_{-} u_{-}  = 2 \psi_0^2 u_{-}(2+ u_{-})
	$$
	whence we can convince ourselves that 
	\begin{equation}
	\label{w:10} 
		\dpr{\cl_-u_-}{u_-}  = 2 \psi_0^2 \int_{-\infty}^{\infty}   u_{-}^2(2+ u_{-})~dx <0,
	\end{equation}
	Indeed, by computations aided by Mathematica, we were  able to  explicitly calculate 
 \begin{align*}
&  \dint_{-\infty}^{\infty} u_{-}^2(2+ u_{-})~dx =-\sqrt{2}\sinh^3(\alpha)\cosh(\alpha)\sqrt{\cosh(2\alpha)+2}\\
 &\times	\left( \df{2\coth(\alpha)csch(\alpha)}{\cosh^2(\alpha)}+2\pi \coth(\alpha)csch^2(\alpha) +\df{2\coth(\alpha)csch(\alpha)}{\cosh^2(\alpha)}+  2\pi \coth(\alpha)csch^2(\alpha) \right)\\
 &= -\sqrt{2}\sinh^3(\alpha)\cosh(\alpha)\sqrt{\cosh(2\alpha)+2} 
 \left(  \df{4\coth(\alpha)csch(\alpha)}{\cosh^2(\alpha)} +4\pi\coth(\alpha)csch^2(\alpha)\right)<0.
 \end{align*}
	
	Thus, $\cl_-$ has a negative eigenvalue and so $n(\cl_-)=1$. Finally, we claim that $\cl_-$ does not have eigenvalue at zero.    Before we start with our contradiction argument, let us point out that since the Schr\"odinger operator has $\cl_+=-\p_{xx}+1 - 6  u_-^2$ and  $u_-^2$ is an even, positive and decaying on $(0, \infty)$ function (i.e. bell-shaped), we conclude that its ground state, is bell-shaped as well.  
	
	Assume now for a contradiction that $0$ is an eigenvalue  $\cl_-[Q]=0$. This will be the second smallest eigenvalue for $\cl_-$, whence it will have exactly one zero, so it will be an odd function, vanishing at zero. So, in particular, $Q: \|Q\|=1$ will be perpendicular to the (bell-shaped) ground state for $\cl_+$.   But now recall $\cl_+<\cl_-$, so we have 
	$
	\dpr{\cl_+ Q}{Q}<\dpr{\cl_- Q}{Q}=0.
	$
	Thus, by Rayleigh formulas 
	$$
	\la_0(\cl_+)<\la_1(\cl_+)\leq  \dpr{\cl_+ Q}{Q}<0, 
	$$
	so $n(\cl_+)\geq 2$, in contradiction with what we know, namely $n(\cl_+)=1$.   Thus, $\cl_-$ does not have a zero eigenvalue, so it is in particular invertible operator. 
\end{proof}
We now describe the spectral properties of the linearized operators at $h=0$.   This is a well-known result, due to  M. Weinstein, \cite{W}, but the reader might consult  the excellent presentation in Section 4.1.1, \cite{SS}. It is convenient to utilize the notion of a generalized kernel of an operator, defined as the subspace $gKer(A):=span[Ker(A), Ker(A^2), \ldots]$. 
\begin{proposition}
	\label{prop:18} 
	For $h=0$, the cubic NLS has the following behavior of the linearized operators: 
	\begin{itemize}
		\item The operator $\cl_+$ has a single and simple negative eigenvalue, a simple eigenvalue at zero, with eigenfunction $u_0'$. $\cl_+$ is strictly positive on the co-dimension two subspace orthogonal to these two directions. 
		\item The operator $\cl_-$ has a simple eigenvalue at zero, spanned by $u_0$. It is positive on the co-dimension one subspace orthogonal to it. 
		\item The operator $\cj\cl$ has 
		$$
		\si_{ess.}(\cj\cl)=\{\pm i\la: \la\in\rone,  |\la|\geq 1\}, \si_{p.p.}(\cj\cl)=\{0\},
		$$
		where zero is an eigenvalue of algebraic  multiplicity four and geometric multiplicity two, generated by the translational and the modulational invariance. More precisely, 
		\begin{eqnarray*}
		 Ker[\cj\cl] &=& Ker[\cl]=span[\left(\begin{array}{c}
			u'  \\ 0
		\end{array}\right), \left(\begin{array}{c}
		0  \\ u
	\end{array}\right)]; \\
  gKer[\cj\cl] &=& span[\left(\begin{array}{c}
	0 \\ \cl_-^{-1}[u']
\end{array}\right), 
\left(\begin{array}{c}
	\cl_+^{-1}[u]  \\ 0
\end{array}\right)]. 
		\end{eqnarray*}
	\end{itemize}
\end{proposition}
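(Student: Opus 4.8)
At $h=0$ the profile equation \eqref{11} becomes $-u_0''+u_0-2u_0^3=0$, whose positive, even ground state is the explicit soliton $u_0(x)=\sech(x)$; it is bell-shaped and exponentially decaying, and $\cl_+=-\p_{xx}+1-6\sech^2(x)$, $\cl_-=-\p_{xx}+1-2\sech^2(x)$ are P\"oschl--Teller operators. For the first two bullets: differentiating the profile equation gives $\cl_+[u_0']=0$, and $u_0'$ has a single zero at $x=0$, so by Sturm--Liouville theory it is the first excited eigenfunction of $\cl_+$; hence $0\in\si_{p.p.}(\cl_+)$ is simple and is the second eigenvalue, $\cl_+$ has exactly one (simple) negative eigenvalue, and $\cl_+$ is bounded below by a positive constant on the codimension-two subspace orthogonal to its ground state and $u_0'$. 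Plugging $u_0$ into $\cl_-$ gives $\cl_-[u_0]=0$ directly; as $u_0>0$ it is the ground state of $\cl_-$, so $0=\min\si(\cl_-)$ is simple and $\cl_->0$ on $u_0^\perp$. The essential spectra equal $[1,\infty)$ by Weyl's theorem, the potentials being exponentially small and $\psi_0=0$ here. (Equivalently, one reads everything off the explicit P\"oschl--Teller spectra $\si(\cl_+)=\{-3,0\}\cup[1,\infty)$, $\si(\cl_-)=\{0\}\cup[1,\infty)$.)

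For the operator $\cj\cl$: its essential spectrum is given by the computation of Lemma \ref{le:14} with $\psi_0=0$, namely $\si_{ess.}(\cj\cl)=\{i\la:\la\in\rone,\ |\la|\geq 1\}$. Since $\cj$ is invertible, $Ker(\cj\cl)=Ker(\cl)=span\{(u_0',0)^T,(0,u_0)^T\}$, so $0$ is an eigenvalue of geometric multiplicity $2$. For the algebraic multiplicity, note that $u_0'\perp Ker(\cl_-)$ and $u_0\perp Ker(\cl_+)$ by parity, so $\cl_-^{-1}[u_0']$ and $\cl_+^{-1}[u_0]$ are well defined, and one checks $\cj\cl(0,\cl_-^{-1}[u_0'])^T=(u_0',0)^T$ and $\cj\cl(\cl_+^{-1}[u_0],0)^T=-(0,u_0)^T$, exhibiting two Jordan chains of length at least $2$, so the generalized kernel has dimension at least $4$. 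It has dimension exactly $4$: a Jordan chain of length $3$ over $(u_0',0)^T$ would require $\dpr{\cl_-^{-1}[u_0']}{u_0'}=0$, impossible since $\cl_->0$ on $u_0^\perp\ni u_0'$; a chain of length $3$ over $(0,u_0)^T$ would require the Vakhitov--Kolokolov quantity $\dpr{\cl_+^{-1}[u_0]}{u_0}$ to vanish, but differentiating in $\om$ the equation $-u_\om''+\om u_\om-2u_\om^3=0$ for the rescaled family $u_\om=\sqrt{\om}\,\sech(\sqrt{\om}\,x)$ gives $\cl_+[\p_\om u_\om\big|_{\om=1}]=-u_0$, whence $\dpr{\cl_+^{-1}[u_0]}{u_0}=-\f12\f{d}{d\om}\norm{u_\om}{L^2(\rone)}^2\big|_{\om=1}<0$. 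This produces exactly the claimed $Ker[\cj\cl]$ and $gKer[\cj\cl]$.

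It remains to exclude point spectrum other than $0$, which I expect to be the main obstacle. From the above $n(\cl_+)=1$, $n(\cl_-)=0$, so $n(\cl)=1$; in the basis $\{(u_0',0)^T,(0,u_0)^T\}$ the matrix $D$ of \eqref{dij} is diagonal (its off-diagonal entries vanish because $\cj^{-1}(u_0',0)^T$ and $\cj^{-1}(0,u_0)^T$ sit in complementary components of $L^2(\rone)\times L^2(\rone)$ while $\cl^{-1}$ is block-diagonal), with entries $\dpr{\cl_-^{-1}[u_0']}{u_0'}>0$ and $\dpr{\cl_+^{-1}[u_0]}{u_0}<0$; hence $\det(D)\neq 0$ and $n(D)=1$. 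The index identity \eqref{e:20} then yields $k_r+2k_c+2k_i^-=n(\cl)-n(D)=0$, so $\cj\cl$ has no eigenvalue off the imaginary axis and no purely imaginary eigenvalue of negative Krein signature. What is \emph{not} excluded by \eqref{e:20} is an isolated or embedded purely imaginary eigenvalue of nonnegative Krein signature; closing this gap is the genuinely nontrivial point, and I would invoke here the classical result of Weinstein \cite{W} (see also \cite[Sec.\ 4.1.1]{SS}). Concretely, $(\cj\cl)^2=\mathrm{diag}(-\cl_-\cl_+,-\cl_+\cl_-)$, so any eigenvalue $i\mu$ of $\cj\cl$ forces $\cl_+\cl_-z_2=\mu^2z_2$, and the explicit P\"oschl--Teller structure of $\cl_\pm$ (which, after all, makes the cubic NLS completely integrable) shows $\cl_+\cl_-$ has no nonzero eigenvalue, while $\pm i$ is a resonance rather than an eigenvalue; hence $\mu=0$. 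Therefore $\si_{p.p.}(\cj\cl)=\{0\}$, completing the proof. All the other pieces --- the profile computation, Sturm--Liouville, and the index bookkeeping --- are routine; the single nonautomatic ingredient is the absence of internal and embedded neutral modes, which rests on the special structure of the cubic NLS and not on the index count alone.
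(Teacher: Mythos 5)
Your proposal is correct, but the comparison here is a bit unusual: the paper offers no proof of Proposition \ref{prop:18} at all --- it is quoted as a known result with a pointer to Weinstein \cite{W} and to Section 4.1.1 of \cite{SS}. What you have written is essentially the standard argument that those references contain, worked out in detail: the P\"oschl--Teller identification $\si(\cl_+)=\{-3,0\}\cup[1,\infty)$, $\si(\cl_-)=\{0\}\cup[1,\infty)$, the Sturm--Liouville placement of $0$ as the second eigenvalue of $\cl_+$ via $\cl_+[u_0']=0$, the two Jordan chains $\cj\cl(0,\cl_-^{-1}[u_0'])^T=(u_0',0)^T$ and $\cj\cl(\cl_+^{-1}[u_0],0)^T=-(0,u_0)^T$, and the termination of both chains at length two via $\dpr{\cl_-^{-1}[u_0']}{u_0'}>0$ and the Vakhitov--Kolokolov sign $\dpr{\cl_+^{-1}[u_0]}{u_0}=-\f12\f{d}{d\om}\norm{u_\om}{L^2}^2\big|_{\om=1}<0$ (with $\norm{u_\om}{L^2}^2=2\sqrt{\om}$, this is $-\f12$). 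All of these computations check out, and your use of the index formula \eqref{e:20} with $n(\cl)=1$, $n(D)=1$ to rule out $k_r$, $k_c$ and $k_i^-$ is a clean way to dispose of most of the off-zero point spectrum; it is also consistent with how the paper uses \eqref{e:20} elsewhere. You correctly identify the one genuinely nontrivial residual claim --- the absence of purely imaginary eigenvalues of positive Krein signature in the gap $(0,1)$ (internal modes) and of embedded eigenvalues, together with the fact that $\pm i$ is only a resonance --- and you defer that to Weinstein's analysis of the explicit operators $\cl_\pm$. Since the paper defers the \emph{entire} proposition to the same reference, this is an acceptable (indeed more informative) level of rigor; if you wanted full self-containment you would need to actually exhibit the resolvent or the squared-operator spectrum of $\cl_+\cl_-$ for the $\sech$ potentials, which neither you nor the paper does.
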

In the arguments in the sequel, we use $h$ as a bifurcation parameter.   As the eigenvalues depend in a $C^1$ way on the parameter $h$, they  may move as $h$ changes. Specifically, for $h=0$, the eigenvalue zt zero is of algebraic multiplicity four (as to respect the translational and modulational invariance). The moment the parameter $h$ is turned on, the modulational invariance is broken, and a pair of eigenvalues separates from zero, in a smooth way. 
\section{Proof of Theorem \ref{theo:10}} 
\label{sec:3} 
We start our considerations with the proof for the instability of $u_+$. This has been previously  established in \cite{B1}, we provide the short argument here for completeness. 
\subsection{The instability of $u_+$} 
\label{sec:3.1}

The instability of $u_+$ is now an easy consequence of the results of Proposition \ref{prop:20} and the index count formula \eqref{e:20}. Indeed, on the right hand side of \eqref{e:20}, we have $n(\cl)=n(\cl_+)+n(\cl_-)=1+0=1$. Thus, the stability is determined by $n(D)$. Since in this case $Ker[\cl_-]=\{0\}$, we have that $D$ is a matrix of one element, namely  $\dpr{\cl_-^{-1} u_+'}{u_+'}$. However, since $\cl_->0$, we see that $\dpr{\cl_-^{-1} u_+'}{u_+'}>0$, whence $n(D)=0$, whence a single real instability is detected by \eqref{e:20}.

\subsection{The case of $u_-$: tracking the modulational eigenvalues as $0<h<<1$} 
\label{sec:3.2}
For $h=0$, we trivially settle on the  standard Schr\"odinger model, where all spectral information, including the spectrum of   $\si(\cj \cl)$, is well-known, see Proposition \ref{prop:18}.  More specifically, the operator $\cj \cl$ at $h=0$ has the structure of the spectrum as described in Proposition \ref{prop:18}, namely two eigenvectors and two generalized eigenvectors co-exist there.

 After turning on the parameter $h$, i.e. the moment $h\neq 0$, 
the ``translational eigenvalue'' pair 
$
\left(\begin{array}{c}
u'  \\ 0
\end{array}\right)$ and its corresponding generalized eigenvector $ \left(\begin{array}{c}
0 \\ \cl_-^{-1}[u']
\end{array}\right)$ persists, due to the fact that translational invariance is still intact, even after adding the $h$ in the model. Modulational invariance is however broken, once $h\neq 0$, so the other pair starts moving away from zero.   Note that the stability of the waves, or equivalently the eigenvalue problem  \eqref{40},  is completely determined by the behavior of this pair of eigenvalues, which at $h=0$ correspond to the modulational invariance. Indeed, as we observed in Lemma \ref{le:14}, $\si_{ess}(\cj\cl)\subset i \rone$. Thus, the wave is spectrally stable, that is, $\si(\cj\cl)\subset i \rone$ if and only if the modulational eigenvalue (of multiplicity two)  at $h=0$ split as a pair of purely  imaginary eigenvalues. We focus on the proof of this fact, for the case of the waves $u_-$. 

To this end,  we look at the right-hand side of \eqref{e:20}. It is clear that 
$n(\cl)=n(\cl_+)+n(\cl_-)=1+1=2$, while $n(D)=0$, so $n(\cl)-n(D)=2$. Thus, according to the indices on the left-hand side, we are presented with the following alternatives:  we either have two different positive unstable eigenvalues for $\cj \cl$ or we have a four tuple of eigenvalues (two of which are unstable), so $k_c=1$  or we have a pair of purely imaginary eigenvalues, with negative Krein signature (and so $k_i^-=1$, $k_r=k_c=0$).  We now present some heuristical argument on why it must be that the two negative Krein signature eigenvalues happen. Let us refute the other two cases - First, the case of two different positive unstable eigenvalues is not viable - this is still a Hamiltonian problem and this will effectively generate four eigenvalues (the two positive and the corresponding two with opposite signs), while we have only two eigenvalues  unaccounted for - namely the two previously modulational eigenvalues, which in the case $h>0$ may start moving, due to the broken modulational invariance. In fact, and for the same reason, even the case of a four tuple of eigenvalues cannot happen, because this creates four eigenvalues, in addition who are still sitting at the zero, for a total of six eigenvalues, whereas we started with four eigenvalues at $h=0$. 

We will show that for small $h$, a pair of purely imaginary eigenvalues, with negative Krein signature appear. 
According to \eqref{e:20}, such a configuration is spectrally stable.   Looking at the alternatives, it suffices to show that a pair of purely imaginary eigenvalues appears close to zero, and then they must necessarily be with negative Krein signatures. 

Before we continue with the construction of the modulational eigenvalues as $h\neq 0$, let us compute $n(D)$. Recall that according to Proposition \ref{prop:20}, we have that $Ker[\cl_-]=\{0\}$, while $Ker[\cl_+]=span[u_-']$. We claim that there is no another generalized eigenvector behind  $
\left(\begin{array}{c}
0 \\ \cl_-^{-1}[u']
\end{array}\right)$. Indeed, otherwise, we would have the solvability of the relation 
$$
\cj \cl \left(\begin{array}{c}
z_1 \\ z_2
\end{array}\right)=\left(\begin{array}{c}
0 \\ \cl_-^{-1}[u_-']
\end{array}\right), 
$$
Solving directly, this means that $z_2=0$, while $\cl_+[z_1]=-\cl_-^{-1}[u_-']$. This then would require a consistency relation 
$\dpr{\cl_-^{-1}[u_-']}{u_-'}=0$, which   is false. In fact, we show that $\dpr{\cl_-^{-1}[u_-']}{u_-'}>0$, for all values of $h$, see below. 

All in all, it turns out that $D$ has only one element, namely $\dpr{\cl_-^{-1}[u'_-]}{u'_-}$. Now, it is not as straightforward as in the classical case to conclude that  $\dpr{\cl_-^{-1}[u_-']}{u_-'}>0$, since $\cl_-$ is not a non-negative operator anymore, since in fact $n(\cl_-)=1$.  On the other hand, its ground state, say 
	$W:\cl_-[W]=-\si^2 W, \|W\|=1$  is bell-shaped (since  $\cl_-=-\p_{xx}+1 - 2 u_-^2$ is a Schr\"odinger operator with bell-shaped potential). Hence by parity considerations  $\dpr{u_-'}{W}=0$, so $u_-'\perp W$. But then, note that $\si(\cl_-^{-1}|_{\{W\}^\perp})\subset (0, \infty)$, whence $\cl_-^{-1}|_{\{W\}^\perp}>0$, whence 
$$
\dpr{\cl_-^{-1}[u'_-]}{u'_-}= \dpr{\cl_-^{-1}|_{\{W\}^\perp} [u'_-]}{u'_-}>0. 
$$
Thus $n(D)=0$ and in addition recall that  this was also useful in establishing that the Jordan block of $
\left(\begin{array}{c}
u'  \\ 0
\end{array}\right)$ is of length two. 

We now turn to the construction of the modulational eigenvalues for $h\neq 0$. Similar to the spectral problem in \cite{HSS}, we set up an ansatz as follows. 
\begin{equation}
\label{100}
\cj \left(\begin{array}{cc}
\cl_+^0+h V_+  & 0 \\
0 & \cl_-^0+h V_- 
\end{array}\right)\left(\begin{array}{c}
\sqrt{h} \psi_1  \\ u_0+h \psi_2
\end{array}\right)=i \mu_0 \sqrt{h} 
\left(\begin{array}{c}
\sqrt{h} \psi_1  \\ u_0+h \psi_2
\end{array}\right)+O\left(\begin{array}{c}
 h^{3/2}  \\ h^2
\end{array}\right)
\end{equation}
where, we have used the fact that $\cl_\pm^h=\cl_\pm^0+h V_\pm$, where $\cl_\pm^0$ are the standard Schr\"odinger operators $\cl^0_+=-\p_x^2+1 - 6 \sech^2(x), \cl^0_-=-\p_x^2+1 - 2 \sech^2(x)$, $u_0=\sech(x)$. Note $\cl_-^0[u_0]=0$. Also, the potentials $V_\pm$ can be explicitly written down, but this will not be necessary for our arguments. 
 Resolving \eqref{100} yields, to leading order in $h$, 
 \begin{eqnarray*}
 & & \cl_-^0 \Psi_2+V_- u_0 = i \mu_0 \Psi_1 \\
 & & -\cl_+^0 \Psi_1 = i \mu_0 u_0.
 \end{eqnarray*}
 Clearly, from the second equation, we need $\Psi_1= - i \mu_0 \cl_+^{-1}[u_0]$, which we then plug in the first equation. This is justified, since 
 $u_0\perp Ker[\cl_+]=span[u_0']$. It remains  to solve 
 $$
  \cl_-^0 \Psi_2=\mu_0^2 \cl_+^{-1}[u_0]-V_- u_0
 $$
This of course gives a solvability condition, namely $\dpr{u_0}{\mu_0^2 \cl_+^{-1}[u_0]-V_- u_0}=0$, which  is actually an equation for $\mu_0$. We obtain 
\begin{equation}
\label{105} 
\mu_0^2=\f{\dpr{V_- u_0}{u_0}}{\dpr{\cl_+^{-1} u_0}{u_0}}. 
\end{equation}
It is well-known (and also directly computable) that $\dpr{\cl_+^{-1} u_0}{u_0}<0$, as this is equivalent to the stability of the soliton $u_0$, as a solution to the Schr\"odinger equation. In fact, this is the Vakhitov-Kolokolov condition for stability of solitary waves, which is well-known to hold for the wave $u_0=sech(x)$. Unfortunately, $V_-$ is a sign-changing solution, so it is not immediately clear how to determine the sign of the quantity $\dpr{V_- u_0}{u_0}$. 
 Instead, we shall show by a roundabout argument that 
$\dpr{V_- u_0}{u_0}<0$. As a consequence,  \eqref{105} has a pair of real solutions 
\begin{equation}
\label{w:9} 
\mu_0=\pm \sqrt{\f{\dpr{V_- u_0}{u_0}}{\dpr{\cl_+^{-1} u_0}{u_0}}},
\end{equation}
 representing a pair of complex imaginary eigenvalues 
$\pm i \sqrt{\f{\dpr{V_- u_0}{u_0}}{\dpr{\cl_+^{-1} u_0}{u_0}}}$. 

Indeed, otherwise, if $\dpr{V_- u_0}{u_0}>0$, then we have constructed (in the form dictated by \eqref{100}) a pair of real eigenvalues for $\cj \cl$, namely $\pm \sqrt{-\f{\dpr{V_- u_0}{u_0}}{\dpr{\cl_+^{-1} u_0}{u_0}}}$, one stable, the other one unstable. But then, $n(\cl)-n(D)=2$, as established earlier, while on the left hand side of \eqref{e:20} $k_r=1$. This is impossible, a contradiction. Thus, $\dpr{V_- u_0}{u_0}<0$ and we have a pair of purely imaginary eigenvalues, with negative Krein signatures.  This shows the following proposition. 
\begin{proposition}
	\label{prop:54} 
	There exits $\al_0>>1$, so that for all $\al\in (\al_0, \infty)$, 
	the corresponding solutions $u_{-,\al}$ described in \eqref{20} are spectrally stable. 
	
	Moreover, the multiplicity four eigenvalue at zero for the standard NLS problem has transformed itself 
into an eigenvalue at zero with multiplicity two, 
	 and a pair $\pm i \mu(\al)$ is a pair of simple eigenvalues of negative Krein signatures, with even eigenfunctions. 
 
	In addition, $\mu:(\al_0,  \infty)\to \rone_+$ is decreasing and smooth  function, with $\lim_{\al\to \infty} \mu(\al)=0$ and $\mu(\al_0)>0$. 
\end{proposition}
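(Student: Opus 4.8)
The plan is to read the statement off the instability index identity \eqref{e:20}, combined with the $O(\sqrt h)$ bifurcation ansatz \eqref{100}. First I would fix the right-hand side of \eqref{e:20} on the branch $u_-$. By Proposition \ref{prop:20}, $n(\cl_+)=n(\cl_-)=1$, so $n(\cl)=2$, and $Ker[\cl_-]=\{0\}$ while $Ker[\cl_+]=span[u_-']$, so the matrix $D$ of \eqref{dij} collapses to the single scalar $\dpr{\cl_-^{-1}[u_-']}{u_-'}$. To show this is strictly positive (hence $n(D)=0$), I would use that the ground state $W$ of the bell-shaped-potential Schr\"odinger operator $\cl_-=-\p_{xx}+1-2u_-^2$ is even, so $u_-'\perp W$ by parity, and that $\cl_-^{-1}$ restricted to $\{W\}^\perp$ is a positive operator. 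The same inequality shows the Jordan chain over the translational null vector $(u_-',0)$ of $\cl$ has length exactly two, which, combined with the invertibility of $\cl_-$, yields that $0\in\si_{p.p.}(\cj\cl)$ has algebraic multiplicity exactly two. Thus \eqref{e:20} becomes $k_r+2k_c+2k_i^-=2$.

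Next I would use $h$ — equivalently $\al$, via the smooth map $h(\al)$ of \eqref{h:10}, which is strictly decreasing for $\al>0$ with $h(\al)\to0$ as $\al\to\infty$ — as the bifurcation parameter, starting from the $h=0$ picture of Proposition \ref{prop:18}, where $0$ has algebraic multiplicity four. Turning on $h$, the translational pair stays at $0$ and the modulational pair is the only spectrum that can move off $0$. Plugging the ansatz \eqref{100} into $\cj\cl\vec z=\la\vec z$ and collecting leading orders gives $\Psi_1=-i\mu_0\cl_+^{-1}[u_0]$ (admissible since $u_0\perp Ker[\cl_+]$) and, from the Fredholm solvability of the $\Psi_2$-equation, the identity \eqref{105}, namely $\mu_0^2=\dpr{V_-u_0}{u_0}/\dpr{\cl_+^{-1}u_0}{u_0}$, whose denominator is the Vakhitov-Kolokolov quantity for $u_0=\sech$ and is classically negative.

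The crux is the sign of the numerator $\dpr{V_-u_0}{u_0}$; since $V_-$ changes sign, a direct evaluation is unpleasant (compare the Mathematica-aided integral in the proof of Proposition \ref{prop:20}), so I would argue by contradiction against \eqref{e:20}. If $\dpr{V_-u_0}{u_0}>0$, then \eqref{w:9} gives imaginary $\mu_0$, i.e. a nonzero real eigenvalue pair of $\cj\cl$ for small $h$, whence $k_r=1$ (it cannot be $2$, since only two eigenvalues can leave $0$ and there are no real eigenvalues at $h=0$), and then the left side of $k_r+2k_c+2k_i^-=2$ is odd — impossible. Hence $\dpr{V_-u_0}{u_0}<0$, and \eqref{w:9} produces a pair $\pm i\mu(\al)$ of purely imaginary eigenvalues near $0$, with $\mu(\al)=\mu_0\sqrt{h(\al)}\,(1+o(1))$. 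Its Krein signature is negative: this follows either from \eqref{e:20} (for small $h$ no real or complex eigenvalues are available, forcing $k_i^-=1$), or directly, since evaluating $\dpr{\cl\vec z}{\vec z}$ on the leading bifurcating eigenvector $\vec z\approx(\sqrt h\,\Psi_1,u_0)$, using $\Psi_1=-i\mu_0\cl_+^{-1}[u_0]$ and \eqref{105}, gives $\dpr{\cl\vec z}{\vec z}=2h\dpr{V_-u_0}{u_0}+O(h^2)<0$. Consequently $u_{-,\al}$ is spectrally stable for all $\al$ larger than some $\al_0\gg1$, and the eigenfunctions of $\pm i\mu(\al)$ are even, since the leading profile is even ($\cl_+$ preserves parity) and $\cj\cl$ commutes with $x\mapsto-x$.

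Finally, simplicity of $\pm i\mu(\al)$ and analytic perturbation theory give smoothness of $\mu$; the limit $\lim_{\al\to\infty}\mu(\al)=0$ is immediate from $\mu(\al)=\mu_0\sqrt{h(\al)}+O(h(\al))$ together with $h(\al)\to0$; and since $h(\al)$ has a strictly negative derivative for $\al>0$ and dominates the $O(h)$ correction, $\mu$ is decreasing on $(\al_0,\infty)$ after enlarging $\al_0$ if necessary, with $\mu(\al_0)>0$. The main obstacle is the step pinning the sign of $\dpr{V_-u_0}{u_0}$: a direct computation looks forbidding, so the index-count contradiction is the essential device; everything else is either contained in Propositions \ref{prop:20} and \ref{prop:18} or is routine perturbative bookkeeping.
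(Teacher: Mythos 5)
Your proposal follows essentially the same route as the paper: the index count \eqref{e:20} with $n(\cl)=2$, $n(D)=0$ (via the parity argument for $\dpr{\cl_-^{-1}[u_-']}{u_-'}>0$), the $O(\sqrt h)$ ansatz \eqref{100} leading to \eqref{105}, and the contradiction-with-the-index-count device to pin the sign $\dpr{V_-u_0}{u_0}<0$, concluding with a pair of purely imaginary eigenvalues of negative Krein signature and the monotonicity of $\mu$ from the leading-order $\sqrt{h(\al)}$ behavior. Your added direct verification $\dpr{\cl\vec z}{\vec z}=2h\dpr{V_-u_0}{u_0}+O(h^2)<0$ is a correct and welcome supplement to the paper's purely index-theoretic identification of the Krein signature, but the argument is otherwise the same.
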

\begin{proof}
	Basically, this is a perturbation argument about the standard NLS case, which corresponds to $h=0$, or equivalently $\al=+\infty$. In the narrative preceding the formal statement of Proposition \ref{prop:54}, we have shown that zero is still an eigenvalue, of multiplicity two, and we have also  constructed the eigenvalues $\pm i\mu(\al)$ for large values of $\al$. 
The only unproven claim in Proposition \ref{prop:54} is that $\mu$ is a decreasing function of $\al$ (hence increasing function of $h$), for large enough values of $\al$ (equivalently small enough values of $h$).  

 In order to see this monotonicity, and even though the dependence on variable $h$ is not smooth at $h=0$, we can express the formula \eqref{w:9} equivalently as 
 \begin{equation}
 \label{h:101} 
  \lim_{h\to 0+} \sqrt{h} \f{d\mu(h)}{h}=\f{1}{2} \sqrt{\f{\dpr{V_- u_0}{u_0}}{\dpr{\cl_+^{-1} u_0}{u_0}}}>0. 
 \end{equation}
 
 This shows that in a small neighborhood of $h=0$, $h\to \mu(h)$ is increasing. 
 \end{proof}
 \subsection{The soliton $u_-$: Tracking the neutral eigenvalues till the  collision}
 \label{sec:3.3}
 
 In Section \ref{sec:3.2}, we have demonstrated that for small values $0<h<<1$, the modulational eigenvalue (of multiplicity two) at $h=0$ splits into a pair of purely imaginary  eigenvalues $\pm i \mu(h), \mu(h)>0$ of negative Krein signature. We now wish to further track this pair as $h$ grows. Recall that by the smooth dependence on the parameters, the wave $u_-$ is spectrally  stable as long as these pair does not turn into a complex instability. We have described the mechanism of how this happens in the statement of our main result, Theorem \ref{theo:10}. We now provide the details of the proof. 
  
  Namely, we will show the following
  \begin{itemize}
  	\item as long as  $\mu(\al)<\sqrt{(1-6\psi_0^2(\al))(1-2\psi_0^2(\al))}$   and 
  	$\pm i \mu(\al)$ are simple eigenvalues (that is, we are in a pre-collision scenario), there is a neighborhood $(\al-\de, \al+\de)$, so that whenever $\tilde{\al}\in (\al-\de, \al+\de)$, the waves $u_{-, \tilde{\al}}$ are spectrally stable, with a pair of negative Krein signature eigenvalues $\pm i \mu(\tilde{\al})$. 
  	\item  There exists $\al^*\geq 0$, so that either $\lim_{\al\to \al^*+} \mu(\al)=\sqrt{(1-6\psi_0^2(\al^*))(1-2\psi_0^2(\al^*))}$ or there exists another family of eigenvalues $\pm i \tilde{\mu}(\al)$ for $\cj \cl_\al$ , with 
  	$$
  	\mu(\al)<\tilde{\mu}(\al): \lim_{\al\to \al^*} \tilde{\mu}(\al)-\mu(\al)=0. 
  	$$
  	Moreover, there exists $\si_0>0$, so that $\min_{\al^*<\al<\infty} \mu(\al)\geq \si_0>0$. 
  	That is, the pair $\pm i \mu(\al)$ potentially exits the imaginary axis either by hitting the edge of the essential spectrum or the pair  $\pm i \tilde{\mu}(\al)$ and stays a fixed distance away from zero. 
  \end{itemize}
   {\bf Remark:} According to the numerics in \cite{B1}, the second alternative occurs, namely $\pm i \mu(\al)$ hits another eigenvalue $\pm i \tilde{\mu}(\al)$ and exits the imaginary axes after as a modulational instability.
	
   Here and below, we use the parameters $h$ and $\al$ interchangeably, due to the one-to-one correspondence described explicitly in \eqref{h:10}. 
   \begin{proposition}
   	\label{prop:98} 
   	Let $\pm i \mu(\al)$ are the eigenvalues described in Proposition \ref{prop:54}, which are in the pre-collision mode. That is, $0<\mu(\al)<\sqrt{(1-6\psi_0^2(\al))(1-2\psi_0^2(\al))}$ and $\pm i \mu(\al)$ are simple. 
   	 Then, there exists $\de=\de(\al)>0$, so that  whenever $\tilde{\al}\in (\al-\de, \al+\de)$, the waves $u_{-, \tilde{\al}}$ are spectrally stable.   Moreover, the mapping $\al\to \mu(\al)$ is  $C^1(\al-\de, \al+\de)$.  
   \end{proposition}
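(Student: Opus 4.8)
The plan is to use $\al$ (equivalently $h$, via the one-to-one correspondence \eqref{h:10}) as a bifurcation parameter and to propagate spectral stability from the given value by combining analytic perturbation theory for the isolated simple eigenvalue $i\mu(\al)$ with the index count \eqref{e:20}. First I would record that $\al\mapsto\cj\cl_\al$ is a real-analytic family of closed operators on $L^2(\rone)\times L^2(\rone)$ with common core $H^2\times H^2$: the potentials in $\cl_+^\al=-\p_{xx}+1-6u_{-,\al}^2$ and $\cl_-^\al=-\p_{xx}+1-2u_{-,\al}^2$ are real-analytic in $\al\in(0,\infty)$ as bounded multiplication operators, by the explicit formulas \eqref{20}. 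By the pre-collision hypothesis, $i\mu(\al)$ is a genuinely isolated point of $\si(\cj\cl_\al)$: it is simple (hence separated from the rest of $\si_{p.p.}$), $\mu(\al)>0$ (separated from $0$), and $\mu(\al)<\sqrt{(1-6\psi_0^2(\al))(1-2\psi_0^2(\al))}$ (separated from $\si_{ess.}(\cj\cl_\al)$, located by Lemma \ref{le:14}). Fixing $r>0$ below the distance from $i\mu(\al)$ to the rest of $\si(\cj\cl_\al)$, standard (Kato-type) analytic perturbation theory furnishes $\de>0$ such that for all $\tilde\al\in(\al-\de,\al+\de)$ the operator $\cj\cl_{\tilde\al}$ has exactly one eigenvalue $\la(\tilde\al)$ in the disk $B(i\mu(\al),r)$ and no essential spectrum there, that eigenvalue is algebraically simple, and $\tilde\al\mapsto\la(\tilde\al)$, together with the Riesz projection $P(\tilde\al)$ and a unit eigenvector $\vec z(\tilde\al)\in H^2\times H^2$, is real-analytic with $\la(\al)=i\mu(\al)$; shrinking $\de$, continuity of the edge $\sqrt{(1-6\psi_0^2)(1-2\psi_0^2)}$ in $\al$ keeps the pre-collision inequality valid on the whole interval.

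Next I would show $\la(\tilde\al)$ remains on the imaginary axis with negative Krein signature. Since $\si(\cj\cl_{\tilde\al})$ is invariant under the reflection $\la\mapsto-\bar\la$ (the Hamiltonian symmetry recorded in the footnote after \eqref{e:20}) and the disk $B(i\mu(\al),r)$ is itself invariant under $\la\mapsto-\bar\la$, the single point $B(i\mu(\al),r)\cap\si(\cj\cl_{\tilde\al})=\{\la(\tilde\al)\}$ must be fixed by that reflection, i.e. $\la(\tilde\al)=i\mu(\tilde\al)$ with $\mu(\tilde\al)\in\rone$ and $\mu(\tilde\al)\to\mu(\al)>0$. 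For the signature, $\dpr{\cl_{\tilde\al}\vec z(\tilde\al)}{\vec z(\tilde\al)}$ depends continuously on $\tilde\al$ (the eigenprojection, hence $\vec z(\tilde\al)$ in $H^2\times H^2$, varies analytically, and $\cl_{\tilde\al}-\cl_\al\to0$ in operator norm), so it stays negative near $\al$ because $\dpr{\cl_\al\vec z_0}{\vec z_0}<0$ by Proposition \ref{prop:54}. Thus $i\mu(\tilde\al)$ and, by complex conjugation, $-i\mu(\tilde\al)$ form a pair of simple purely imaginary eigenvalues of negative Krein signature, giving $k_i^-(\tilde\al)\ge1$.

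Finally I would close with the index count. For every parameter value in the range, Proposition \ref{prop:20} gives $n(\cl_\pm^\al)=1$, $Ker[\cl_-^\al]=\{0\}$ and $Ker[\cl_+^\al]=span[u'_{-,\al}]$, while the bell-shapedness argument of Section \ref{sec:3.2} gives $D_\al=\dpr{(\cl_-^\al)^{-1}u'_{-,\al}}{u'_{-,\al}}>0$; hence $\det D\ne0$ and \eqref{e:20} applies with $n(\cl)=2$, $n(D)=0$, so $k_r+2k_c+2k_i^-=2$ for every $\tilde\al\in(\al-\de,\al+\de)$. Together with $k_i^-(\tilde\al)\ge1$ this forces $k_r(\tilde\al)=k_c(\tilde\al)=0$; since $\si_{ess.}(\cj\cl_{\tilde\al})\subset i\rone$ and there are now no eigenvalues off the imaginary axis, $\si(\cj\cl_{\tilde\al})\subset i\rone$, i.e. $u_{-,\tilde\al}$ is spectrally stable. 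The $C^1$-regularity of $\mu$ is then immediate from the analyticity of $\la(\cdot)$ and $\mu(\tilde\al)=-i\la(\tilde\al)$. I expect the one subtle point to be the observation that the index identity removes any need to control how eigenvalues might emerge from the edge of $\si_{ess.}$ or split off the zero eigenvalue: once $k_i^-\ge1$ is in hand, $k_r+2k_c+2k_i^-=2$ leaves no room for such eigenvalues. The reflection-symmetry step pinning $\la(\tilde\al)$ to $i\rone$, together with the continuity of the Krein form, is the conceptual crux, while the remaining perturbation-theoretic inputs are routine.
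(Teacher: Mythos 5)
Your argument is correct, but it follows a genuinely different route from the paper's. The paper proves persistence by an explicit first--order expansion $z(\al+\de)=z_0+\de q$, $\mu(\al+\de)=\mu_0+\de r$, reduces solvability of the order--$\de$ system \eqref{e:62} to the Fredholm condition of Lemma \ref{le:22}, and reads off the scalar equation \eqref{e:85} $i r\dpr{z_0}{\cj z_0}=-\dpr{\cv z_0}{z_0}$; since $i\dpr{z_0}{\cj z_0}=\mu^{-1}\dpr{\cl z_0}{z_0}<0$ by the negative Krein signature, $r$ exists and is \emph{real}, which is how the paper keeps the eigenvalue on $i\rone$. You instead invoke Kato's perturbation theory for the isolated simple eigenvalue, pin the continued eigenvalue to the imaginary axis via the Hamiltonian reflection $\la\mapsto-\bar\la$ of the spectrum together with the reflection--invariance of the disk, propagate the sign of the Krein form by continuity, and then close with the index count \eqref{e:20} to exclude any other unstable spectrum. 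The trade--off: the paper's computation yields the explicit derivative formula for $\mu'(\al)$ (which it then discusses in the remark following the proof), whereas your argument is less computational, avoids having to check the non--degeneracy $\dpr{z_0}{\cj z_0}\neq 0$ directly (it is hidden in algebraic simplicity plus the signature), and makes fully explicit the final step --- that $k_i^-\ge 1$ together with $n(\cl)-n(D)=2$ forces $k_r=k_c=0$ --- which the paper leaves implicit. Two small caveats: your claim of real--analyticity of $\al\mapsto\cl_\al$ in operator norm is more than you need (and would require a uniform--in--$x$ holomorphic extension of $u_{-,\al}^2$; the Lipschitz estimate \eqref{l:38} used in Proposition \ref{prop:99} already gives the $C^1$ dependence that suffices), and you should note that the hypotheses of the index theorem ($\det D\neq 0$, $\cj^{-1}:Ker[\cl]\to Ker[\cl]^{\perp}$) are verified for every $\al$ by the arguments of Section \ref{sec:3.2}, as you indeed point out.
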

   {\bf Remark:} Interestingly, the proof breaks down, if either $\mu(\al)=0$ or \\  $\mu(\al)=\sqrt{(1-6\psi_0^2(\al))(1-2\psi_0^2(\al))}$ or $ i \mu(\al)$ is not a simple eigenvalue. 
   \begin{proof}
   Fix  $\al$ is so that $0<\mu(\al)<\sqrt{(1-6\psi_0^2(\al))(1-2\psi_0^2(\al))}$ and  $ i \mu(\al)$ is  a simple eigenvalue. This is exactly the setup of Lemma \ref{le:22}, where $i\mu\notin \si_{ess.}(\cj \cl)$.  That is 
   	\begin{equation}
   	\label{e:47} 
   	\cj \cl z(\al)=i \mu(\al) z(\al),
   	\end{equation}
   	We now construct, under the assumptions imposed on $\al$  the eigenvalue in a neighborhood, $(\al-|\de|, \al+|\de|)$ for some small $\de: |\de|<<1$. First, we introduce the approximate operators 
   	\begin{eqnarray*}
   		\cl_+(\al+\de) &=& -\p_{xx}+1 - 6 u^2_{-,\al+\de}=-\p_{xx}+1 - 6 u^2_{-, \al}-\de V_+ +O(\de^2), \\
   		&=& \cl_+^0-\de V_++O(\de^2), \ \ V_+=12 u_{-,\al} \f{\p u_{-,\al}}{\p \al} \\
   		\cl_-(\al+\de) &=& -\p_{xx}+1 - 2 u^2_{-,\al+\de}=-\p_{xx}+1 - 2 u^2_{-, \al}-\de V_- +O(\de^2), \\
   		&=& \cl_-^0-\de V_-+O(\de^2), \ \ V_-=4 u_{-,\al} \f{\p u_{-,\al}}{\p \al}.
   	\end{eqnarray*}
   	Note that the potentials $V_\pm$ are sign-changing functions over $x\in (0, \infty)$. Introduce also an expansion in the  eigenvectors and the eigenvalues 
   	\begin{eqnarray*}
   		z(\al+\de) &=&z(\al)+\de q+O(\de^2)=:z_0+\de q+O(\de^2) \\
   		\mu(\al+\de) &=& \mu(\al)+\de r+O(\de^2)=:\mu_0+\de r+O(\de^2).
   	\end{eqnarray*}
     Using standard inverse function theorems, with 
   	$$
   	(z,\mu) \in \{(H^2(\rone)\times H^2(\rone))\times \rone: |z-z_0|<<1, |\mu-\mu_0|<<1\},
   	$$
   
   	 it  is enough  to show that $r,q$ can be selected so that the following system is solvable up to first order in $\de$: 
   	\begin{equation}
   	\label{e:60} 
   	\cj  \left(\begin{array}{cc} 
   	\cl^0_+ - \de V_+ & 0 \\
   	0 & \cl^0_--\de V_- 
   	\end{array}\right)(z_0+\de q) - i (\mu_0+\de r)(z_0+\de q)=0. 
   	\end{equation}
   	The order zero equations express the fact that \eqref{e:47} is satisfied.  The next order $\de$ equations yield  the following system 
   	\begin{equation}
   	\label{e:62} 
   	(\cj  \cl^0 - i \mu_0  )q=\cj \left(\begin{array}{cc} 
   	V_+ & 0 \\
   	0 &   V_- 
   	\end{array}\right) z_0 + i r z_0,
   	\end{equation}
   	which we need to check is solvable. 
   	Applying Lemma \ref{le:22}, matters reduce to verifying the solvability condition
   	\begin{equation}
   	\label{e:80} 
   	\dpr{\cj \left(\begin{array}{cc} 
   		V_+ & 0 \\
   		0 &   V_- 
   		\end{array}\right) z_0 + i r z_0}{\cj z_0}=0.
   	\end{equation}
   	This works out to an equation for $r$, which is 
   	\begin{equation}
   	\label{e:85} 
   	i r \dpr{z_0}{\cj z_0} = - \dpr{\left(\begin{array}{cc} 
   		V_+ & 0 \\
   		0 &   V_- 
   		\end{array}\right) z_0}{z_0}. 
   	\end{equation}
   	This has a solution, provided $ \dpr{z_0}{\cj z_0} \neq 0$. Once this is established, 
   	 we will be done with the proof of Proposition \ref{prop:98}. 
   	 To this end, from \eqref{e:47}, we have that $\cl_0z_0=-i\mu \cj z_0$, whence 
   	 $$
   	 i  \dpr{z_0}{\cj z_0}=\f{\dpr{\cl z_0}{z_0}}{\mu}<0,
   	 $$
   	 since $i \mu$ has negative Krein signature. 
   \end{proof}
   {\bf Remark:} The approximate formula \eqref{e:85} for $r=\mu'(\al)$ should,  in principle imply the expected  sign $\mu'(\al)<0$ (since we expect the mapping $\al\to \mu(\al)$ to be monotone decreasing, as $\mu(\infty)=0$ and it increases to $\mu(\al^*)>0$). Unfortunately, we cannot make a determination of the sign of the quantity $\dpr{\left(\begin{array}{cc} 
   	V_+ & 0 \\
   	0 &   V_- 
   	\end{array}\right) z_0}{z_0}$ based on our argument. 
  
  Our next goal is to establish that for some $\al^*\geq 0$, $\mu(\al)\in (0, \sqrt{(1-6\psi_0^2(\al))(1-2\psi_0^2(\al))})$ and $i \mu(\al)$ is a simple eigenvalue for all $\al\in (\al^*, \infty)$, while at least one of these two changes\footnote{According to the numerics in \cite{B1}, a collision with another eigenvalue occurs prior to   hitting the essential spectrum}  at $\al=\al^*$. Eventually, either $\al^*=0$ or $\al^*>0$ and  either   $i \mu(\al)$ collides with another eigenvalue $i \tilde{\mu}(\al)$ or 
  $
 \lim_{\al\to \al^{*}+} \mu(\al)=\sqrt{(1-6\psi_0^2(\al^*))(1-2\psi_0^2(\al^*))}).
  $

  \begin{proposition}
  	\label{prop:99} 
  	There exists $\al^*\geq 0$, so that $\mu(\al)\in (0, \sqrt{(1-6\psi_0^2(\al))(1-2\psi_0^2(\al))})$ and $i \mu(\al)$ is a simple eigenvalue,  for all $\al\in (\al^*, \infty)$. 
  	
  	Also, there exists $\si_0>0$, so that $\min_{\al^*<\al<\infty} \mu(\al)\geq \si_0>0$ and  either 
  	there exists a family of eigenvalues $\pm i\tilde{\mu}(\al): \mu(\al)<\tilde{\mu}(\al), \lim_{\al\to \al^*+} \tilde{\mu}(\al) - \mu(\al)=0$ or  
  	$$
  \lim_{\al\to \al^{*+}} \mu(\al)=\sqrt{(1-6\psi_0^2(\al^*))(1-2\psi_0^2(\al^*))}).
  	$$
  	In other words, $\pm i \mu(\al)$ (eventually) exits the imaginary axes (and becomes unstable)  after hitting another eigenvalue or through the edge of the continuous  spectrum. 
  \end{proposition}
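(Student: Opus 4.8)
Write $m(\al):=\sqrt{(1-6\psi_0^2(\al))(1-2\psi_0^2(\al))}$ for the endpoint of $\si_{ess.}(\cj\cl_\al)$ (Lemma \ref{le:14}). The plan is a downward continuation in $\al$: starting from the standard NLS problem at $\al=+\infty$ ($h=0$), one follows the eigenvalue $i\mu(\al)$ of Proposition \ref{prop:54} using Proposition \ref{prop:98}, certifies stability along the way with the index count \eqref{e:20}, and uses a Riesz projection argument to forbid the pair $\pm i\mu(\al)$ from returning to the origin. Let $\al^*:=\inf\{\al_1\ge0:\ i\mu\ \text{extends to a simple eigenvalue of}\ \cj\cl\ \text{on}\ (\al_1,\infty)\ \text{with}\ 0<\mu<m\ \text{and negative Krein signature}\}$. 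By Proposition \ref{prop:54} this set contains $[\al_0,\infty)$, so $\al^*\in[0,\al_0]\subset[0,\infty)$; restricting continuations shows it is an interval of the form $(\al^*,\infty)$ or $[\al^*,\infty)$, so $i\mu$ is defined on all of $(\al^*,\infty)$ with those properties, and Proposition \ref{prop:98} gives in addition that $\al\mapsto\mu(\al)$ is $C^1$ there --- this is the first assertion.

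For spectral stability on $(\al^*,\infty)$ I would invoke \eqref{e:20}. For such $\al$, Proposition \ref{prop:20} gives $n(\cl)=n(\cl_+)+n(\cl_-)=1+1=2$, while the computation of Section \ref{sec:3.2} (which uses only Proposition \ref{prop:20} and parity, hence is valid for every $h\in(0,\tfrac{2}{3\sqrt{6}})$) gives $\det D\ne0$ and $n(D)=0$. Thus \eqref{e:20} reads $k_r+2k_c+2k_i^-=2$; since the pair $\pm i\mu(\al)$ has negative Krein signature, $k_i^-\ge1$, which forces $k_r=k_c=0$ and $k_i^-=1$, hence $\si(\cj\cl_\al)\subset i\rone$ and $u_{-,\al}$ is spectrally stable. (Any further purely imaginary eigenvalues --- for instance the pair conjectured in \cite{BBK} to be peeled off $\pm im(\al)$ --- must carry positive Krein signature and leave this count undisturbed.)

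It remains to examine $\al^*$. If $\al^*=0$ the entire parameter range is stable and the edge alternative holds degenerately, since $m(\al)\to0$ as $h\to\tfrac{2}{3\sqrt{6}}$; so assume $\al^*>0$. Since the point spectrum of $\cj\cl_{\al^*}$ inside the gap is discrete and the spectrum depends continuously on $\al$, the limit $\mu^*:=\lim_{\al\to\al^*+}\mu(\al)$ exists in $[0,m(\al^*)]$ with $i\mu^*\in\si(\cj\cl_{\al^*})$; by minimality of $\al^*$ together with the remark following Proposition \ref{prop:98}, either $\mu^*=0$, or $\mu^*=m(\al^*)$, or $\mu^*\in(0,m(\al^*))$ and $i\mu^*$ is a multiple eigenvalue of $\cj\cl_{\al^*}$, i.e.\ the limit of $i\mu(\al)$ and of a second eigenvalue $i\tilde\mu(\al)$. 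I would rule out $\mu^*=0$ as follows. Exactly as in Section \ref{sec:3.2}, since $\cl_-$ is invertible at $\al^*$, $Ker[\cl_+]=span[u_-']$ there, and $\dpr{\cl_-^{-1}u_-'}{u_-'}\ne0$, the generalized kernel of $\cj\cl_{\al^*}$ at $0$ is exactly two-dimensional, spanned by $\left(\begin{array}{c}u_-'\\0\end{array}\right)$ and $\left(\begin{array}{c}0\\\cl_-^{-1}u_-'\end{array}\right)$; hence $0$ is an eigenvalue of $\cj\cl_{\al^*}$ of algebraic multiplicity $2$. Choose $\rho>0$ with $\{|\la|\le\rho\}\cap\si(\cj\cl_{\al^*})=\{0\}$. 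Since $\al\mapsto\cj\cl_\al$ is norm-resolvent continuous (the potentials $u_{-,\al}^2$ vary continuously in $L^\infty$, with exponential decay in $x$), the circle $\{|\la|=\rho\}$ avoids $\si(\cj\cl_\al)$ for $\al$ near $\al^*$, so the Riesz projections $P_\al=\tfrac{1}{2\pi i}\oint_{|\la|=\rho}(\la-\cj\cl_\al)^{-1}\,d\la$ form a norm-continuous family of finite-rank projections and $rank\,P_\al\equiv rank\,P_{\al^*}=2$ near $\al^*$. But if $\mu(\al_n)\to0$ along some $\al_n\downarrow\al^*$, then for $n$ large the disk $\{|\la|<\rho\}$ contains the persistent eigenvalue $0$ (algebraic multiplicity $2$) together with $\pm i\mu(\al_n)$, whence $rank\,P_{\al_n}\ge4$ --- a contradiction. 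Hence $\mu^*>0$; setting $\si_0:=\tfrac12\mu^*$ gives $\mu(\al)\ge\si_0$ for $\al$ near $\al^*$, and $\pm i\mu(\al)$ leaves $i\rone$ only through the edge $\pm im(\al^*)$ or through collision with a second eigenvalue $\pm i\tilde\mu(\al)$; in the latter case the ordering $\mu(\al)<\tilde\mu(\al)$ near $\al^*$ records that $i\tilde\mu$ is the pair born off the essential spectrum edge (see \cite{BBK}).

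The main obstacle is this last step --- precluding the return of $\pm i\mu(\al)$ to the origin. The decisive point is that the generalized kernel of $\cj\cl_{\al^*}$ at $0$ is \emph{saturated} at dimension two, which is exactly where Proposition \ref{prop:20} and $\dpr{\cl_-^{-1}u_-'}{u_-'}\ne0$ enter: it cannot absorb the pair $\pm i\mu(\al)$, so norm-resolvent continuity of the spectral projection forces the contradiction. I would not expect to decide rigorously which of the two remaining alternatives (edge versus eigenvalue collision) actually occurs --- according to \cite{B1} that is a numerical matter.
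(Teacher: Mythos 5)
Your proposal reaches the correct conclusion, and the framing (define $\al^*$ as the infimum of the continuation range, reduce everything to ruling out $\lim_{\al\to\al^*+}\mu(\al)=0$) matches the paper. But the mechanism you use for that crucial exclusion is genuinely different. The paper argues via the negative Krein signature: using the uniform spectral gaps $\limsup_{\al\to\al^*+}\la_0(\cl_{\pm,\al})<-\si_0$, $\liminf\la_1(\cl_{-,\al})>\si_0$, $\liminf\la_2(\cl_{+,\al})>\si_0$, it writes the eigenvalue equation componentwise, shows the projections of the normalized eigenfunction onto the ground states $\Psi_\pm$ are $O(\mu(\al))$, and deduces the quantitative coercivity bound $0>\dpr{\cl z}{z}\geq \si_0-8\mu(\al)/\si_0$, hence $\mu(\al)\geq\si_0^2/8$; parity (working in the even subspace) is needed there to neutralize the translational zero mode of $\cl_+$. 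You instead observe that the generalized kernel of $\cj\cl_\al$ at $0$ is saturated at dimension two for every $\al$ (since $\cl_-$ is invertible and $\dpr{\cl_-^{-1}u_-'}{u_-'}>0$, facts the paper establishes for all $h$), and then use norm-continuity and local rank-constancy of the Riesz projection over a small circle about the origin: the rank-two projection at $\al^*$ cannot absorb the extra pair $\pm i\mu(\al_n)$, so $\mu$ cannot tend to zero. Both arguments are sound. Yours is softer but cleaner: it needs no componentwise estimates, no uniform gap bounds of the type \eqref{l:37}, and no appeal to evenness, because the translational mode is counted explicitly in the rank; it also does not even use the Krein signature at this step. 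The paper's argument buys an explicit quantitative lower bound $\mu(\al)\geq\si_0^2/8$ expressed in terms of the spectral gaps of $\cl_\pm$, which your rank argument does not produce. One small point of honesty in your favor: you only claim $\mu(\al)\geq\si_0$ near $\al^*$, which is what either argument actually delivers (the paper's stated $\min_{\al^*<\al<\infty}\mu(\al)\geq\si_0$ is in tension with $\lim_{\al\to\infty}\mu(\al)=0$ from Proposition \ref{prop:54}, and its own proof likewise restricts attention to $(0,\al_0)$). The remaining ingredients of your write-up --- the index-count certification of stability on $(\al^*,\infty)$ and the dichotomy at $\al^*$ (edge of $\si_{ess.}$ versus collision with $\pm i\tilde\mu$) --- are consistent with, and no less rigorous than, the paper's treatment.
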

  \begin{proof}
  	According to the results in Proposition \ref{prop:54}, we do not have to worry about the behavior of $\mu(\al)$ for very large $\al$, so it suffices to consider an interval $(0, \al_0)$, with $\al_0$ as in Proposition \ref{prop:54}. 
  	Due to the results of Proposition \ref{prop:98}, $\al\to \mu(\al)$ is a continuous function and we may define 
  	$$
  	\al^*=\inf\{	 \al >0:  i \mu(\al) \ \ \textup{is pre-collision}\}. 
  	$$
  	 Now, it is either the case that $\al^*=0$ or else $\al^*>0$. In the former case, there is nothing to do, while in the latter case, it remains to rule out the possibility that  $\lim_{\al\to \al^*+} \mu(\al)=0$. Let us note that when $\al^*>0$, there exists $\si_0>0$, so that 
  	 \begin{equation}
  	 \label{l:37} 
  	 \limsup_{\al\to \al_+} \la_0(\cl_{\pm,\al})<-\si_0, \ \liminf_{ \al\to \al_{+}}\la_1(\cl_{-, \al})>\si_0, 
  	   \liminf_{\al\to \al_+} \la_2(\cl_{+,\al})>\si_0.
  	 \end{equation}
  	 Indeed, \eqref{l:37} follows once we realize that for each compact interval 
  	 $J=[\al_1, \al_2]$, there is a constant $C=C_J$, so that for each $f\in H^2$ and $\nu_1, \nu_2\in J$, 
  	  \begin{equation}
  	  \label{l:38} 
  	 |\dpr{(\cl_{\pm,\nu_1}-\cl_{\pm,\nu_2})f}{f}|\leq C |\nu_1-\nu_2| \|f\|_{L^2}^2,
  	 \end{equation}
  	 as the operators $(\cl_{\pm,\nu_1}-\cl_{\pm,\nu_2})f=const(u_{\pm, \nu_1}^2-u_{\pm, \nu_2}^2)f=const(\nu_1-\nu_2) \p_\nu u_{\pm, \tilde{\nu}}^2f$. Then, once we have  \eqref{l:38}, we easily conclude that $\lim_{\al\to \al^*} \la_j(\cl_{\pm, \al})=\la_j(\cl_{\pm, \al^*}), j=0,1,\ldots$ and so on. Then, $ \limsup_{\al\to \al_+} \la_0(\cl_{\pm,\al})=
  	 \la_0(\cl_{\pm,\al^*})<0$, according to Proposition \ref{prop:20}. The other implications in   \eqref{l:37} follow in a similar manner. 
  	 
  	 Now, let us go back to the task at hand, namely to refute the possibility $\lim_{\al\to \al^*} \mu(\al)=0$. To this end, assume for a contradiction that in fact $\lim_{\al\to \al^*} \mu(\al)=0$. Consider the eigenvalue problem \eqref{e:47}. Note that by the construction in Proposition \ref{prop:98},  the eigenvalue problem is solved in the even subspace. In particular, the eigenvalue $\la_1(\cl_{+,\al})=0$ is not very relevant in our discussion as its eigenspace is spanned by an odd function $u_+'$. 
  	 
  	  	Introduce the real and imaginary parts $z^R:=\Re z(\al); z^I=\Im z(\al)$. Writing out the relation in \eqref{e:47} in terms of $z_1^R, z_1^I, z_2^R, z_2^I$  yields 
  	 \begin{eqnarray}
  	 \label{i:10}
  	 & &  \cl_{+, \al} z_1^R(\al)=\mu(\al) z_2^I(\al), \ \cl_{+, \al} z_1^I=-\mu(\al) z_2^R(\al) \\
  	 \label{i:20} 
  	 & & \cl_{-,\al} z_2^R(\al)=-\mu(\al) z_1^I(\al), \  \cl_{-,\al} z_2^I(\al)=\mu(\al)  z_1^R(\al).
  	 \end{eqnarray}
  	The eigenvectors $z(\al)\in D(\cl_\pm)=H^2$, so we normalize them as follows  $\|z(\al)\|_{L^2}=1$.  Denote the ground states by $\Psi_{\pm,\al}: \|\Psi_{\pm,\al}\|_{L^2}=1$, $\cl_{\pm,\al} \Psi_{\pm,\al}=\la_0(\cl_{\pm,\al}) \Psi_{\pm,\al}$. 
  In order to simplify the notations, we drop the dependence on $\al$. 
   	By taking $L^2$ norm in \eqref{i:10} and \eqref{i:20}, and applying \eqref{l:37}, we arrive at the estimates 
  	\begin{eqnarray}
  	\label{i:45} 
   |\la_0( \cl_{+})||\dpr{z_1^R}{\Psi_{+}}|\leq \|\cl_{+} z_1^R\|_{L^2}=\mu(\al) \|z_2^I\|_{L^2}.
  	\end{eqnarray}
  	Similarly, we establish 
  		\begin{eqnarray}
  		\label{i:47} 
  		& &   |\la_0(\cl_{+})| 	|\dpr{z_1^I}{\Psi_{+}}|\leq  \mu(\al) \|z_2^R\|_{L^2}, 	
  		|\la_0(\cl_{-})|	|\dpr{z_2^R}{\Psi_{-}}|\leq  \mu(\al) \|z_1^I\|_{L^2}, \\
  			\label{i:48} 
  		& & |\la_0(\cl_{-})|	|\dpr{z_2^I}{\Psi_{-}}|\leq  \mu(\al) \|z_1^R\|_{L^2}. 
  		\end{eqnarray}
  	By taking dot products in \eqref{i:10} and \eqref{i:20} with appropriate vectors, we have (with  $P_+ f:=f-\dpr{f}{\Psi_+} \Psi_+$, projecting over the positive subspace of $\cl_+$), 
  	\begin{eqnarray*}
  & & 	\dpr{\cl_{+} z_1^R}{z_1^R} = \la_0(\cl_+)\dpr{z_1^R}{\Psi_+}^2+
  	\dpr{\cl_+ P_+ z_1^R}{P_+ z_1^R}\geq \la_0(\cl_+)\dpr{z_1^R}{\Psi_+}^2+\si_0 \|P_+ z_1^R\|^2 \\
  	&\geq & \si_0 \|z_1^R\|_{L^2}^2 -(|\la_0(\cl_+)|+\si_0) \dpr{z_1^R}{\Psi_+}^2\geq \si_0 \|z_1^R\|_{L^2}^2 - \f{2}{\si_0} \mu(\al) \|z_2^I\|_{L^2}^2\geq \si_0 \|z_1^R\|_{L^2}^2 - \f{2}{\si_0} \mu(\al). 
  	\end{eqnarray*}
  	where we have used the estimate \eqref{i:45} and the normalization $\|z_2^I\|_{L^2}\leq \|z\|_{L^2}=1$.  Similarly, we establish 
  		\begin{eqnarray*}
  		& & 	\dpr{\cl_{+} z_1^I}{z_1^I} \geq  \si_0 \|z_1^I\|_{L^2}^2 - \f{2}{\si_0} \mu(\al), \\
  		& &  \dpr{\cl_{-} z_2^R}{z_2^R} \geq  \si_0 \|z_2^R\|_{L^2}^2 - \f{2}{\si_0} \mu(\al),\\
  		& & \dpr{\cl_{-} z_2^I}{z_2^I} \geq  \si_0 \|z_2^I\|_{L^2}^2 - \f{2}{\si_0} \mu(\al)
  		\end{eqnarray*}
  	Adding up all these estimates, together with the negative Krein signature,  implies 
  	\begin{eqnarray*}
  	0>\dpr{\cl z}{z}=\dpr{\cl_{+} z_1^R}{z_1^R}+	\dpr{\cl_{+} z_1^I}{z_1^I}+\dpr{\cl_{-} z_2^R}{z_2^R}+ \dpr{\cl_{-} z_2^I}{z_2^I}\geq \si_0 \|z\|^2- \f{8\mu(\al)}{\si_0}=
  	 \si_0 - \f{8\mu(\al)}{\si_0}
  		\end{eqnarray*}
  	This is clearly in contradiction with $\lim_{\al\to \al^*}\mu(\al) =0$. With this, Proposition \ref{prop:99} is established. 
  \end{proof}

\end{document}